\documentclass[11pt]{article}

\usepackage{amsmath}
\usepackage{amssymb, yhmath, ulem}
\usepackage[ruled,noend,linesnumbered]{algorithm2e}
\usepackage{float}

\usepackage{enumerate}
\usepackage{listings}

\usepackage[pdftex]{graphicx}
\usepackage{epstopdf}
\usepackage{bm}
\usepackage{authblk}

\usepackage{color}
\definecolor{dkgreen}{rgb}{0,0.6,0}
\definecolor{gray}{rgb}{0.5,0.5,0.5}

\usepackage[bookmarks,pagebackref,pdfpagelabels=true]{hyperref}

\hypersetup{
    colorlinks=true,        
    linkcolor=red,          
    citecolor=green,        
    filecolor=magenta,      
    urlcolor=dkgreen           
}
\usepackage{kbordermatrix}

\textheight=8.1in
\topmargin= -.53 in
\textwidth=6.7in
\headheight=12.0 pt
\headsep=0.5in
\evensidemargin=-.2in
\oddsidemargin=-.2in
\parskip=.1 cm

\usepackage[c]{documentation}

\newcommand{\CURID}{{\sf{CUR-ID}}}
\newcommand{\CUR}{{\sf{CUR}}}

\newcommand{\ID}{{\sf{ID}}}
\newcommand{\QR}{{\sf{QR}}}
\newcommand{\QB}{{\sf{QB}}}
\newcommand{\SVD}{{\sf{SVD}}}


\newtheorem{theorem}{Theorem}[section]

\newtheorem{lemma}[theorem]{Lemma}

\newenvironment{proof}[1][Proof]{\begin{trivlist}\item[\hskip \labelsep {\bfseries #1.}]}{$\Box$\end{trivlist}}


\newcommand{\range}{\operatorname{\mathcal{R}}}

\newcommand{\orth}{\texttt{orth}}

\newcommand{\qr}{\texttt{qr}}
\newcommand{\svd}{\texttt{svd}}
\numberwithin{equation}{section}

\newcommand{\mtx}[1]{\bm{\mathsf{#1}}}
\newcommand{\vct}[1]{\bm{\mathsf{#1}}}

\title{RSVDPACK: An implementation of randomized algorithms for
computing the singular value, interpolative, and CUR decompositions
of matrices on multi-core and GPU architectures}

\author{
{Sergey Voronin and Per-Gunnar Martinsson}
}
\date{\today}

\begin{document}

\maketitle

\lstset{language=Matlab,
   keywords={break,case,catch,continue,else,elseif,end,for,function,
      global,if,otherwise,persistent,return,switch,try,while},
   basicstyle=\ttfamily,
   keywordstyle=\color{blue},
   commentstyle=\color{red},
   stringstyle=\color{dkgreen},
   numbers=left,
   numberstyle=\tiny\color{gray},
   stepnumber=1,
   numbersep=10pt,
   backgroundcolor=\color{white},
   tabsize=4,
   showspaces=false,
   showstringspaces=false}

\begin{abstract}
RSVDPACK is a library of functions for computing low rank approximations of matrices.
The library includes functions for computing standard (partial) factorizations such as the Singular Value
Decomposition (SVD), and also so called ``structure preserving'' factorizations such as the
Interpolative Decomposition (ID) and the CUR decomposition. The ID and CUR factorizations
pick subsets of the rows/columns of a matrix to use as bases for its row/column space.
Such factorizations preserve properties of the matrix such as sparsity or non-negativity,
are helpful in data interpretation, and require in certain contexts less memory than a partial SVD.
The package implements highly efficient computational algorithms based on randomized sampling,
as described and analyzed in
\textit{N. Halko, P.G. Martinsson, J. Tropp, ``Finding structure with randomness:
Probabilistic algorithms for constructing approximate matrix decompositions,''
SIAM Review, \textbf{53}(2), 2011,} and subsequent papers. This manuscript presents some modifications
to the basic algorithms that improve performance and ease of use.
The library is written in C and supports both multi-core CPU and GPU architectures.
\end{abstract}

\section{Introduction}
This manuscript describes a collection of functions for computing low-rank approximations
to matrices. In other words, given an $m\times n$ matrix $\mtx{A}$ stored in RAM, we seek to
compute an approximation $\mtx{A}_{\rm approx}$ of rank $k < \min(m,n)$, represented in
factored form. We
consider the case where $\mtx{A}_{\rm approx}$ is an approximate singular value
decomposition (SVD), and also the case where $\mtx{A}_{\rm approx}$ is represented
in a so called ``structure preserving'' factorization such as the CUR or interpolative
decompositions, see \cite{2005_martinsson_skel,2014arXiv1412.8447V,2009_mahoney_CUR}.
The problems addressed arise frequently
in scientific computing, data analysis, statistics, and many other areas.

Among the different factorizations, the partial singular value decomposition
is known to be optimal in the sense that for any given rank, it results in a minimal error
$\|\mtx{A} - \mtx{A}_{\rm approx}\|$, as measured in either the $\ell_{2}$-operator norm, or the
Frobenius norm. The interpolative and CUR decompositions provide for larger than
minimal error at any given rank, but preserve certain useful properties
such as sparsity and non-negativity.

The algorithms used are based on randomized sampling, and are highly computationally
efficient. In particular, the developed software aims at reduced communication cost and
good scalability on multi-core/processor systems.

The SVD algorithms used here were originally published in
\cite{2006_martinsson_random1_orig}, were later extended in
\cite{2007_martinsson_PNAS} and analyzed and surveyed in
\cite{2011_martinsson_randomsurvey}. For other decompositions,
we have made use of more recent results from \cite{2015arXiv150307157M} 
and \cite{2014arXiv1412.8447V}, which were inspired by \cite{2009_mahoney_CUR}. 
Related work is reported in \cite{boutsidis2014optimal,sorensen2016deim}.
In our development, we made some
modifications to previously published versions and implemented what we believe to be
the most computationally efficient and practical algorithmic variants for use with
applications.

To introduce the idea of randomized algorithms for computing low rank
approximations to matrices, we show in Figure \ref{fig:RSVD} a basic
randomized algorithm called RSVD for computing an approximation to the
dominant $k$ modes in a singular value decomposition (SVD) of a given
matrix $\mtx{A}$. The
algorithm shown is intended for use in the case where the rank $k$ is
much smaller than the matrix dimensions, $k \ll \min(m,n)$. In this
environment, RSVD tends to execute very fast since all interactions
with the large matrix $\mtx{A}$ happen only through
the matrix-matrix multiplications on lines (2) and (4). The matrix-matrix
factorization is a communication efficient algorithm for which
highly optimized software is available on most computing platforms.
In particular, the matrix-matrix multiplication executes very fast
on modern multi-core CPUs and massively multi-core GPUs. All operations
in the algorithm that are not matrix-matrix multiplications involve
small matrices that have either roughly $k$ rows or roughly $k$ columns (to be
precise, they have $k+p$ rows or columns, where $p$ is a small ``over-sampling
parameter'' that we typically set to $5$ or $10$).

In this paper, we describe efficient implementations of the algorithm
shown in Figure \ref{fig:RSVD}, as well as some algorithms with additional
features that extend the range of problems that can be handled.
These algorithms include functions that achieve high computational efficiency
in cases where the numerical rank of the matrix is not known in advance, and
instead must be determined as part of the computation (given a requested
tolerance). They also include variations of the basic algorithm that incur
slightly higher computational costs, but in return produce close to optimally
accurate results even for matrices with ``noisy'' entries such as, e.g., measured
statistical data. (To be precise, these modified algorithms are designed
for matrices whose singular values decay slowly.) The high computational
performance attained by these algorithms can be largely attributed to one
recurring idea:

\begin{quote}
\textit{\textbf{Key idea:} Use randomization to cast as much
of the computation as possible in terms of highly efficient
matrix-matrix multiplications.}
\end{quote}

The algorithms we discuss can readily be implemented directly in Matlab, which for many
users may be sufficient. We remark also that in recent time, other software for randomized
decompositions has been developed, for example, in the form of routines for R
\cite{Erichson16}; as well as codes in Fortran \cite{martinsson2008id} and 
Python \cite{fbpca}. 
The C based routines in RSVDPACK are meant to be used for larger sized applications 
where computational efficiency and parallel scalability are important and include 
optimizations for multi-core processors and GPUs.
The codes also incorporate some of the latest randomized methods for
\SVD, \ID, and \CUR~computations refined by the authors and provide the possibility
to use an input tolerance parameter instead of a fixed rank.


\begin{figure}
\fbox{\begin{minipage}{\textwidth}
(1) Draw an $n\times (k+p)$ Gaussian random matrix $\mtx{G}$.\hfill
      \texttt{\color{red}G = randn(n,k+p)}

(2) Form the $m\times (k+p)$ sample matrix $\mtx{Y} = \mtx{A}\,\mtx{G}$.\hfill
      \texttt{\color{red}Y = A * G}

(3) Form an $m\times (k+p)$ orthonormal matrix $\mtx{Q}$ such that $\mtx{Y} = \mtx{Q}\,\mtx{R}$.\hfill
      \texttt{\color{red}[Q, R] = qr(Y,0)}

(4) Form the $(k+p)\times n$ matrix $\mtx{B} = \mtx{Q}^{*}\,\mtx{A}$.\hfill
      \texttt{\color{red}B = Q' * A}

(5) Compute the SVD of the small matrix $\mtx{B}$: $\mtx{B} = \hat{\mtx{U}}\,\mtx{D}\,\mtx{V}^{*}$.\hfill
      \texttt{\color{red}[Uhat, D, V] = svd(B,'econ')}

(6) Form the matrix $\mtx{U} = \mtx{Q}\,\hat{\mtx{U}}$.\hfill
      \texttt{\color{red}U = Q * Uhat}

(7) Truncate the trailing $p$ terms. \hfill
      \texttt{\color{red}U = U(:,1:k); V = V(:,1:k); D = D(1:k,1:k)}
\end{minipage}}
\caption{A randomized algorithm for computing an approximate singular value decomposition
of a given matrix. The inputs are an $m\times n$ matrix $\mtx{A}$, a target rank $k$, and
an over-sampling parameter $p$ (the choice $p=5$ is often very good). The outputs are
orthonormal matrices $\mtx{U}$ and $\mtx{V}$ of sizes $m\times k$ and $n\times k$, respectively,
and a $k\times k$ diagonal matrix $\mtx{D}$ such that $\mtx{A} \approx \mtx{U}\mtx{D}\mtx{V}^{*}$.}
\label{fig:RSVD}
\end{figure}

The manuscript is organized as follows:
Section \ref{sec:matrix_decompositions} lists known
facts about matrix factorizations and the low-rank approximation problem that we need. Some
of these facts are standard results, and some are perhaps less well known, in particular facts
regarding the ``structure preserving'' factorizations.
Section \ref{sec:randomized_algorithms} reviews how randomized algorithms can be used to compute
low-rank approximations to matrices, and also includes some extensions and modifications that
have not previously been published.
Section \ref{sec:software} describes the functionality of the RSVDPACK software.
Section \ref{sec:performance} shows the results of numerical experiments that illustrate
the speed and accuracy of our software.
Section \ref{sec:conc} summarizes our key findings and discusses future work.
Section \ref{sec:availability} describes the license terms and availability of the software.

\section{Matrix decompositions}
\label{sec:matrix_decompositions}

This section introduces our notation, and describes the full and low rank decompositions which we will use.
We describe the singular value decomposition (SVD), the column pivoted QR decomposition, the one and two sided
interpolative decompositions (IDs),  and the \CUR~decomposition.
In terms of approximation error for the rank $k$ decompositions,  the truncated \SVD~is best, followed by
the \QR~and \ID~decompositions (with identical errors) and then by the \CUR.
In terms of memory requirements for dense matrices, the two \ID~decompositions
of $\mtx{A}$ require the least space, followed by the \SVD~and the \CUR.
However, if $\mtx{A}$ is a sparse matrix and a sparse storage format is used
for the factor matrices, the \ID~and \CUR~decompositions can be stored more efficiently
than the \SVD. In the sparse case, the \CUR~storage requirement will in many cases be minimal
amongst all the factorizations. The details of the factorizations appear in the subsections below,
while the pseudocode for the algorithms to compute the one sided \ID, two sided \ID, and
\CUR~factorizations appear in Appendix \ref{app:pseudo}.

For further details, the material on the SVD is covered in most standard textbooks, e.g., \cite{golub}.
The ID and CUR decompositions are described in further detail in, e.g.,
\cite{2005_martinsson_skel,2015arXiv150307157M,2014arXiv1412.8447V,2009_mahoney_CUR,boutsidis2014optimal,sorensen2016deim}.

\subsection{Notation}
In what follows, we let $\mtx{A}$ be a matrix with real entries.
The extension to the complex case is straight-forward in principle, but our code does not
yet have this capability implemented. The transpose of a matrix $\mtx{A}$ is denoted
$\mtx{A}^{*}$ to simplify the extension to complex matrices.
The norms $\|\cdot\|_{F}$ and $\|\cdot\|_2$ refer to the Frobenius and the spectral
(operator $\ell_2$) matrix norms, respectively. In relations where either matrix norm can
be used, we write $\|\cdot\|$. For vectors, $\|\cdot\|$ refers to the usual 
Euclidean norm. By $\range(\mtx{A})$ we refer to the set which is the range or column space 
of matrix $\mtx{A}$.
We say that a matrix is orthonormal (ON) if its columns form an orthonormal set.
We use the notation \orth~to refer to an unpivoted QR factorization. In other
words, given a matrix $\mtx{A}$ of size $m\times n$ with $m \geq n$, the matrix
$\mtx{Q} = \orth(\mtx{A})$ is an $m\times n$ ON matrix whose columns form
an orthonormal basis for the columns of $\mtx{A}$.
(Using Matlab notation, the operation \orth~can be implemented via compact \QR~factorization using the syntax
$[\mtx{Q},\sim] = qr(\mtx{A},0)$; observe that this is closely related to, but
not identical to, the native function \textit{orth} in Matlab.)
We use Matlab style indexing to refer to matrix row or column extraction.
Thus, $\mtx{V}(1:a,1:b)$, refers to a submatrix formed by extracting the first $a$ rows
and $b$ columns of $\mtx{V}$.
By $J_r$ and $J_c$ we denote index (integer) vectors of row and column numbers of $\mtx{A}$,
corresponding to some particular rearrangement.
We let $\mathbb{N}(0,1)$ denote a normalized Gaussian probability distribution,
and use the term \textit{GIID matrix} to refer to a matrix whose entries are drawn
independently from $\mathbb{N}(0,1)$.
Using Matlab notation, an $m\times n$ GIID matrix is generated via $\mtx{R} = randn(m,n)$). The expectation of a random variable is denoted $\mathrm{E}[\dots]$ and the variance by $\mathrm{Var}[\dots]$.

\subsection{The singular value decomposition}
Let $\mtx{A}$ be an $m\times n$ matrix with real entries.
Setting $r = \min(m,n)$, every such matrix admits a so
called ``economic singular value decomposition (SVD)'' of the form
\begin{equation}
\label{eq:svdofA}
\begin{array}{ccccccccccc}
\mtx{A} &=& \mtx{U} & \mtx{\Sigma} & \mtx{V}^{*},\\
m\times n && m\times r & r\times r & r\times n
\end{array}
\end{equation}
where $\mtx{U}$ and $\mtx{V}$ are orthonormal matrices and $\mtx{\Sigma}$ is a diagonal matrix.
The columns $(\vct{u}_{j})_{j=1}^{r}$ and $(\vct{v}_{j})_{j=1}^{r}$ of $\mtx{U}$ and $\mtx{V}$ are called
the left and right singular vectors of $\mtx{A}$, respectively, and the diagonal entries
$(\sigma_{j})_{j=1}^{r}$ of $\mtx{\Sigma}$ are the singular values of $\mtx{A}$. The singular
values of $\mtx{A}$ are ordered so that
$\sigma_{1} \geq \sigma_{2} \geq \cdots \geq \sigma_{r} \geq 0$.
In other words,
$$
\mtx{U} = \bigl[\vct{u}_{1}\ \vct{u}_{2}\ \cdots\ \vct{u}_{r}\bigr],
\qquad
\mtx{V} = \bigl[\vct{v}_{1}\ \vct{v}_{2}\ \cdots\ \vct{v}_{r}\bigr],
\qquad\mbox{and}\qquad
\mtx{\Sigma} = \mbox{diag}(\sigma_{1},\,\sigma_{2},\,\dots,\,\sigma_{r}).
$$
The factorization (\ref{eq:svdofA}) can be viewed as expressing $\mtx{A}$ as a sum of $p$
rank-one matrices
$\mtx{A} = \sum_{j=1}^{r}\sigma_{j}\,\vct{u}_{j}\,\vct{v}_{j}^{*}$.
In the setting of this article, we are primarily interested in the case where the singular values
$\sigma_{j}$ decay relatively rapidly to zero, meaning that the sum
converges rapidly. In this case, it is often helpful to approximate $\mtx{A}$ using an
approximation $\mtx{A}_{k} \approx \mtx{A}$ defined by the truncated sum
\begin{equation}
\label{eq:svdofAtrunc}
\mtx{A}_{k} = \sum_{j=1}^{k}\sigma_{j}\,\vct{u}_{j}\,\vct{v}_{j}^{*} = \mtx{U}_{k}\,\mtx{\Sigma}_{k}\,\mtx{V}_{k}^{*},
\end{equation}
where $k$ is a number less than $r$, and
$$
\mtx{U}_{k} = \bigl[\vct{u}_{1}\ \vct{u}_{2}\ \cdots\ \vct{u}_{k}\bigr],
\qquad
\mtx{V}_{k} = \bigl[\vct{v}_{1}\ \vct{v}_{2}\ \cdots\ \vct{v}_{k}\bigr],
\qquad\mbox{and}\qquad
\mtx{\Sigma}_{k} = \mbox{diag}(\sigma_{1},\,\sigma_{2},\,\dots,\,\sigma_{k}).
$$
It is well known that the truncated SVD $\mtx{A}_{k}$ is the most accurate of all
rank-$k$ approximations to $\mtx{A}$, in the following sense \cite{eckart1936approximation}:
\begin{theorem}[Eckart-Young]
\label{thm:eckartyoung}
Let $\mtx{A}$ be an $m\times n$ matrix with singular value decomposition (\ref{eq:svdofA}).
Then for any $k$ such that $1 \leq k \leq r$, the truncated SVD $\mtx{A}_{k}$,
as defined by (\ref{eq:svdofAtrunc}) is the optimal approximation to $\mtx{A}$ in the sense
that
$$
\|\mtx{A} - \mtx{A}_{k}\| = \inf\{\|\mtx{A} - \mtx{B}\|\,\colon\,\mtx{B}\mbox{ has rank }k\},
$$
where $\|\cdot\|$ is either the $\ell^{2}$-operator norm or the Frobenius norm.
The minima are given by,
\begin{equation}
\|\mtx{A} - \mtx{A}_{k}\|_2 = \sigma_{k+1},
\qquad\mbox{and}\qquad
\|\mtx{A} - \mtx{A}_{k}\|_F = \left(\sum_{j=k+1}^{r}\sigma_{j}^{2}\right)^{1/2}.
\end{equation}
\end{theorem}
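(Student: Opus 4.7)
The plan is to prove the two norm cases separately, handling the spectral norm via a dimension-counting argument and the Frobenius norm via Weyl-type interlacing inequalities on singular values. In both cases, the easy direction is to verify that $\mtx{A}_{k}$ actually achieves the stated minima: since $\mtx{A} - \mtx{A}_{k} = \sum_{j=k+1}^{r}\sigma_{j}\vct{u}_{j}\vct{v}_{j}^{*}$ is itself in SVD form with singular values $\sigma_{k+1}\geq\cdots\geq\sigma_{r}$, the spectral and Frobenius norms of $\mtx{A}-\mtx{A}_{k}$ can be read off immediately, giving $\sigma_{k+1}$ and $(\sum_{j=k+1}^{r}\sigma_{j}^{2})^{1/2}$ respectively. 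So the real content is the lower bound: for any rank-$k$ matrix $\mtx{B}$, these quantities are also a floor for $\|\mtx{A}-\mtx{B}\|$.

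For the spectral norm, I would fix an arbitrary $\mtx{B}$ with $\rank(\mtx{B})\leq k$ and observe that the null space of $\mtx{B}$ has dimension at least $n-k$, while the span of $\vct{v}_{1},\dots,\vct{v}_{k+1}$ has dimension $k+1$. By dimension counting in $\real^{n}$ these two subspaces intersect nontrivially, so I can pick a unit vector $\vct{w}\in\Null(\mtx{B})\cap\spn(\vct{v}_{1},\dots,\vct{v}_{k+1})$. Writing $\vct{w}=\sum_{j=1}^{k+1}c_{j}\vct{v}_{j}$ with $\sum c_{j}^{2}=1$, I get $(\mtx{A}-\mtx{B})\vct{w}=\mtx{A}\vct{w}=\sum_{j=1}^{k+1}c_{j}\sigma_{j}\vct{u}_{j}$, whose squared norm is $\sum_{j=1}^{k+1}c_{j}^{2}\sigma_{j}^{2}\geq\sigma_{k+1}^{2}$. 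Hence $\|\mtx{A}-\mtx{B}\|_{2}\geq\sigma_{k+1}$, matching the upper bound.

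For the Frobenius norm I would invoke Weyl's monotonicity inequality for singular values: if $\mtx{B}$ has rank at most $k$, then $\sigma_{j}(\mtx{A}-\mtx{B})\geq\sigma_{j+k}(\mtx{A})$ for all $j\geq 1$ (with the convention $\sigma_{\ell}=0$ for $\ell>r$). This follows from the same min-max characterization used above, applied at index $j$. Squaring and summing over $j=1,\dots,r-k$ gives
\[
\|\mtx{A}-\mtx{B}\|_{F}^{2}=\sum_{j\geq 1}\sigma_{j}(\mtx{A}-\mtx{B})^{2}\geq\sum_{j=1}^{r-k}\sigma_{j+k}(\mtx{A})^{2}=\sum_{\ell=k+1}^{r}\sigma_{\ell}^{2},
\]
which matches $\|\mtx{A}-\mtx{A}_{k}\|_{F}^{2}$ and finishes the proof.

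The main obstacle, and the only nontrivial ingredient, is the Weyl-type inequality $\sigma_{j}(\mtx{A}-\mtx{B})\geq\sigma_{j+k}(\mtx{A})$ for $\rank(\mtx{B})\leq k$. I would derive it by the Courant--Fischer min-max formula: $\sigma_{j+k}(\mtx{A})$ is the minimum over codimension-$(j+k-1)$ subspaces $S$ of $\max_{\vct{x}\in S,\|\vct{x}\|=1}\|\mtx{A}\vct{x}\|$, and one can build a candidate $S$ of the required codimension inside $\Null(\mtx{B})$ intersected with the optimizer for $\sigma_{j}(\mtx{A}-\mtx{B})$, on which $\mtx{B}\vct{x}=0$ forces $\|\mtx{A}\vct{x}\|=\|(\mtx{A}-\mtx{B})\vct{x}\|\leq\sigma_{j}(\mtx{A}-\mtx{B})$. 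With this interlacing in hand, both norm bounds reduce to the direct SVD computation for $\mtx{A}_{k}$, completing the theorem.
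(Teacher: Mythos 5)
The paper states Theorem~\ref{thm:eckartyoung} as a known result, citing Eckart and Young (1936), and gives no proof of its own, so there is no in-paper argument to compare against. Your proof is the standard textbook one and is correct: the achievability part reads off both norms directly from the SVD of the tail $\mtx{A}-\mtx{A}_k=\sum_{j>k}\sigma_j\vct{u}_j\vct{v}_j^*$; the spectral-norm lower bound for a competitor $\mtx{B}$ of rank at most $k$ follows from the dimension count $\dim\Null(\mtx{B})+\dim\spn(\vct{v}_1,\dots,\vct{v}_{k+1})\ge(n-k)+(k+1)>n$, which produces a unit vector $\vct{w}$ with $\|(\mtx{A}-\mtx{B})\vct{w}\|\ge\sigma_{k+1}$; and the Frobenius-norm lower bound follows from the Weyl interlacing inequality $\sigma_j(\mtx{A}-\mtx{B})\ge\sigma_{j+k}(\mtx{A})$, which you correctly obtain from Courant--Fischer by restricting to a subspace inside $\Null(\mtx{B})$ intersected with an optimizer for $\sigma_j(\mtx{A}-\mtx{B})$. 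One small remark: the paper phrases the infimum over $\mtx{B}$ of rank exactly $k$, whereas your lower bounds hold for $\rank(\mtx{B})\le k$; this is the stronger and cleaner statement and costs nothing, and it quietly handles the degenerate case $\sigma_{k+1}=0$ where the minimizer need not have rank exactly $k$.
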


\subsection{The column pivoted QR factorization and low rank approximation}
\label{sec:QRandonesideID}

Let $\mtx{A}$ be an $m\times n$ matrix with real entries as before
and set $r = \min(m,n)$. The column pivoted QR-factorization (CPQR) of $\mtx{A}$
takes the form
\begin{equation}
\label{eq:QR}
\begin{array}{ccccccc}
\mtx{A} & \mtx{P} &=& \mtx{Q} & \mtx{S}.\\
m\times n & n\times n && m\times r & r\times n
\end{array}
\end{equation}
where $\mtx{P}$ is a permutation matrix, $\mtx{Q}$ has orthonormal columns,
and $\mtx{S}$ is upper triangular.
(The upper triangular factor is more commonly written $\mtx{R}$ but we use $\mtx{S}$
to avoid confusion with the factors in the CUR decomposition.)

The QR factorization is commonly computed via iterative algorithms such as Gram-Schmidt
or Householder QR \cite[Sec.~5.2]{golub}, which proceed via a sequence of rank-1
updates to the matrix. When column pivoting is used, the process can be halted
after $k$ steps to produce a rank-$k$ approximation $\mtx{A}_{\rm approx}$ to $\mtx{A}$.
To illustrate, suppose that we have completed $k$ steps of the QR-factorization
process, and partition the resulting $\mtx{Q}$ and $\mtx{S}$ to split off the first $k$ columns and rows:
$$
\mtx{Q} = \kbordermatrix{& k & r-k\\ m &\mtx{Q}_1&\mtx{Q}_2},
\quad\mbox{and}\quad
\mtx{S} = \kbordermatrix{& k & n-k\\ k &\mtx{S}_{11} & \mtx{S}_{12}\\ r-k &\mtx{0} &\mtx{S}_{22}}.
$$
We can write (\ref{eq:QR}) as
\begin{equation}
\label{eqn:qrtrunc}
\mtx{A}
=
\bigl[\mtx{Q}_{1}\ \mtx{Q}_{2}\bigr]
\left[\begin{array}{cc}
\mtx{S}_{11} & \mtx{S}_{12} \\
\mtx{0}      & \mtx{S}_{22}
\end{array}\right]\mtx{P}^{*}
=
\underbrace{\mtx{Q}_1 \bigl[\mtx{S}_{11}\ \mtx{S}_{12}\bigr]\mtx{P}^{*}}_{=:\mtx{A}_{\rm approx}} +
\underbrace{\mtx{Q_2} \bigl[\mtx{0}\ \mtx{S}_{22}\bigr]\mtx{P}^{*}}_{\mbox{``remainder term''}}.
\end{equation}
The approximation error is now given by the following simple relation
$$
\|\mtx{A} - \mtx{A}_{\rm approx}\| =
\|\mtx{Q_2} \bigl[\mtx{0}\ \mtx{S}_{22}\bigr]\mtx{P}^{*}\| =
\|\bigl[\mtx{0}\ \mtx{S}_{22}\bigr]\| =
\|\mtx{S}_{22}\|.
$$
Computing a rank $k$ approximation via a partial QR factorization is typically much
faster than computing a partial singular value decomposition. The price one pays is
that the approximation error gets larger. In situations where the singular values of
$\mtx{A}$ exhibit robust decay, the sub-optimality is typically very modest
\cite{gu1996,2005_martinsson_skel}, but for certain
rare matrices, substantial sub-optimality can result \cite{1966_kahan_NLA}.

\subsection{The one-sided Interpolative Decomposition (ID)}
\label{sec:singleID}

The one-sided interpolative decomposition can be obtained by a slight amount of
post-processing of a partial CPQR. As a starting point, let us consider the situation
(\ref{eqn:qrtrunc}) that we find ourselves in after $k$ steps of the QR factorization process.
For this discussion, it is convenient to represent the permutation matrix $\mtx{P}$ using an
index vector $J_{\rm c} \in \mathbb{Z}_{+}^{n}$, where
\begin{equation}
\label{eq:Jc_index_vec}
\begin{array}{cccc}
{J}_c &=& [J_{\rm skel},\,&J_{\rm res}]\\
1\times n && 1\times k & 1\times (n-k)
\end{array},
\end{equation}
so that $\mtx{P} = \mtx{I}(:,J_{\rm c})$, where $\mtx{I}$
is the $n\times n$ identity matrix. Then $\mtx{A}\mtx{P} = \mtx{A}(:,J_{\rm c})$.
We can now write the rank $k$ approximant $\mtx{A}_{\rm approx}$ that was defined in
(\ref{eqn:qrtrunc}) as
\begin{equation}
\label{eq:lily1}
\mtx{A}_{\rm approx} =
\mtx{Q}_1 \begin{bmatrix}\mtx{S}_{11} & \mtx{S}_{12} \end{bmatrix}\mtx{P}^{*}
=
\mtx{Q}_1 \mtx{S}_{11} [\mtx{I}_k \quad \mtx{S}_{11}^{-1} \mtx{S}_{12}]\mtx{P}^{*}
=
\mtx{Q}_1 \mtx{S}_{11} [\mtx{I}_k \quad \mtx{T}_l]\mtx{P}^{*},
\end{equation}
where $\mtx{T}_l$ is the $k\times (n-k)$ matrix resulting from solving the linear system
$$
\mtx{S}_{11} \mtx{T}_l = \mtx{S}_{12}.
$$
Observe that $\mtx{S}_{11}$ necessarily has rank $k$ and is in consequence invertible.
(If the rank of $\mtx{S}_{11}$ would be less than $k$, then the exact rank of $\mtx{A}$
would also be less than $k$ and we would have halted the QR factorization earlier.)

From \eqref{eqn:qrtrunc} it follows that:
\begin{eqnarray}
\nonumber
\mtx{A}(:, J_c)
\ &=& \
\mtx{Q}_1 \begin{bmatrix} \mtx{S}_{11} & \mtx{S}_{12} \end{bmatrix}+
\mtx{Q}_2 \begin{bmatrix} \mtx{0} & \mtx{S}_{22} \end{bmatrix} \\
\label{eq:AJc_relation}
\ &=& \
\kbordermatrix{&
k & n-k\\
m & \mtx{Q}_1 \mtx{S}_{11} & \mtx{Q}_1 \mtx{S}_{12} + \mtx{Q}_2 \mtx{S}_{22}
}.
\end{eqnarray}
Now observe from \eqref{eq:AJc_relation} that the matrix $\mtx{Q}_{1}\mtx{S}_{11}$ consists
simply of the $k$ pivot columns that are listed first in $J_{\rm c}$.
We define this quantity as the $m\times k$ matrix
\begin{equation}
\label{eq:defC}
\mtx{C} := \mtx{A}(:,J_{\rm c}(1:k)) = \mtx{Q}_{1}\mtx{S}_{11}.
\end{equation}
Moreover, we define a \textit{column interpolation matrix} $\mtx{V}$ as the $n\times k$ matrix
\begin{equation}
\label{eq:defV}
\mtx{V} := \mtx{P}\left[\begin{array}{c}\mtx{I}_k \\ \mtx{T}_l^{*}\end{array}\right].
\end{equation}
Inserting (\ref{eq:defC}) and (\ref{eq:defV}) into (\ref{eq:lily1}), we find the expression
\begin{equation}
\label{eqn:column_approx_id2}
\mtx{A}_{\rm approx} = \mtx{C} \mtx{V}^{*}.
\end{equation}
Equation (\ref{eqn:column_approx_id2}) is known as a \textit{column \ID~of rank $k$} of $\mtx{A}$.
Heuristically, the column \ID~identifies a subset of the columns of $\mtx{A}$ (the $k$ columns
identified in $J_{\rm c}(1:k)$) that serve as an approximate basis for the column space of the
matrix.

The approximation error $\mtx{A} - \mtx{C}\mtx{V}^{*}$ is identical to the error in the
partial column pivoted QR factorization, since $\mtx{C}\mtx{V}^{*} = \mtx{A}_{\rm approx}$,
where $\mtx{A}_{\rm approx}$ is the approximant defined by (\ref{eqn:qrtrunc}).
Consequently,
\begin{equation}
\label{eq:id_error}
\|\mtx{A} - \mtx{C}\mtx{V}^{*}\| = \|\mtx{A} - \mtx{A}_{\rm approx}\| = \|\mtx{S}_{22}\|.
\end{equation}

Just as a \textit{column \ID} can be derived by orthonormalizing the columns of $\mtx{A}$ via a
\QR~factorization, we can also derive a \textit{row \ID} by orthonormalizing the rows of $\mtx{A}$.
Performing a $k$-step \QR~factorization of $\mtx{A}^{*}$, we end up with an approximate
factorization
\begin{equation}
\begin{array}{cccc}
\mtx{A} &\approx& \mtx{W} &\mtx{R},\\
m\times n && m\times k & k\times n
\end{array}
\end{equation}
where $\mtx{R}$ is a $k\times n$ matrix that consists of $k$ of the rows of $\mtx{A}$. To be precise,
$$
\mtx{R} = \mtx{A}(J_{\rm r}(1:k),:),
$$
where $J_{\rm r}$ is the permutation vector resulting from the QR factorization of $\mtx{A}^{*}$.

\subsection{Two sided ID and CUR Decompositions}
\label{subsec:twosidedidandcur}

The matrix factorizations described in Section \ref{sec:singleID} use either a subset of the
columns as a basis for the column space, or a subset of the rows as a basis for the row space.
Next, we will describe the \textit{two sided ID} and the CUR decompositions which select subsets
of both the columns and the rows, to serve as bases for both the column and the row spaces.

To derive the two sided \ID, we start by constructing a column-\ID~so that we have the
approximation (\ref{eqn:column_approx_id2}). Next, we execute a row-\ID~on the tall thin
matrix $\mtx{C}$, to obtain a factorization
\begin{equation}
\label{eq:factorC}
\mtx{C} = \mtx{W}\mtx{C}(J_{\rm r}(1:k),:).
\end{equation}
Observe that the factorization (\ref{eq:factorC}) is exact since the rank of $\mtx{C}$ is at most $k$.
Inserting (\ref{eq:factorC}) into (\ref{eqn:column_approx_id2}), and observing that
$$
\mtx{C}(J_{\rm r}(1:k),:) = \mtx{A}(J_{\rm r}(1:k),J_{\rm c}(1:k)),
$$
we obtain the \textit{two-sided ID}
\begin{equation}
\label{eq:doubleID}
\begin{array}{cccccc}
\mtx{A} \approx & \mtx{A}_{\rm approx} &=& \mtx{W} & \mtx{A}(J_{\rm r}(1:k),J_{\rm c}(1:k)) & \mtx{V}^{*},\\
&m\times n && m\times k & k\times k & k\times n
\end{array}
\end{equation}
Observe that the rank-$k$ approximation $\mtx{A}_{\rm approx}$ remains identical to the
matrix defined in (\ref{eqn:qrtrunc}), which means that the two-sided \ID~incurs exactly
the same error as the column \ID, and the truncated QR decomposition.

The popular \CUR~decomposition takes the form
\begin{equation}
\label{eq:defCUR}
\begin{array}{ccccccc}
\mtx{A} &\approx& \mtx{C} &\mtx{U} &\mtx{R},\\
m\times n && m\times k & k\times k & k\times n
\end{array}
\end{equation}
where the matrices $\mtx{C}$ and $\mtx{R}$ consists of $k$ columns and rows
of $\mtx{A}$, respectively, just as in Section \ref{sec:singleID}. The \CUR~decomposition
can be obtained from the two-sided \ID. As a first step, we use the index vector
$J_{\rm r}$ and $J_{\rm c}$ in (\ref{eq:doubleID}) to define
\begin{equation*}
\mtx{C} = \mtx{A}(:,J_{\rm c}(1:k)),
\qquad \mbox{and} \qquad
\mtx{R} = \mtx{A}(J_{\rm r}(1:k),:).
\end{equation*}
Then observe that since the matrix $\mtx{C}$ is the same in (\ref{eq:defCUR}) as it is in
(\ref{eqn:column_approx_id2}), we can convert the approximation (\ref{eqn:column_approx_id2})
into a \CUR~decomposition if we can determine a $k\times k$ matrix $\mtx{U}$ such that
\begin{equation}
\label{eq:CUR_ID_Urecovery1}
\mtx{U} \mtx{R} = \mtx{V}^{*},
\end{equation}
The system (\ref{eq:CUR_ID_Urecovery1}) is overdetermined, and solving it typically
incurs an additional error. Consequently, the approximation error in (\ref{eq:defCUR}) is
typically larger than the approximation error beyond the error incurred in the original
QR factorization.

\subsection{Specific rank and tolerance based decompositions}

Notice that each of the discussed decompositions (\SVD, \ID, and \CUR) can be
computed either to a certain fixed rank $k$ or to a tolerance $\textrm{TOL}$.
For each decomposition, the fixed rank $k$ refers to the size of the
product matrices in each decomposition. We can also use a tolerance to
determine the decomposition size. In the case of the low rank \SVD, given
$\textrm{TOL}$, we can compute $k$ such that $\sigma_{k+1} \leq \textrm{TOL}$.
The one and two sided \ID~decompositions are based on the pivoted \QR~factorization
and the \CUR~decomposition is based in turn on the two sided \ID. For these
decompositions, given a parameter $\textrm{TOL}$, we can perform a sufficient number of
steps in the \QR~factorization to obtain $\|\mtx{S}_{22}\| \leq \textrm{TOL}$.

\section{Randomized Algorithms}
\label{sec:randomized_algorithms}

The classical algorithms for the factorizations discussed in
Section \ref{sec:matrix_decompositions} may be too costly for matrices with a large
memory footprint. However, the algorithms to obtain all the factorizations
we have discussed: the low rank \SVD, the \ID, and \CUR~factorizations, can be substantially
accelerated by means of randomized sampling, with relatively
small accuracy tradeoffs \cite{2011_martinsson_randomsurvey}.

The factors $\mtx{U}_k$, $\mtx{\Sigma}_k$, and $\mtx{V}_k$ in the partial \SVD~of a matrix
$\mtx{A}$, cf.~(\ref{eq:svdofAtrunc}), can be computed by constructing the
\textit{full} \SVD~of $\mtx{A}$ using standard software routines such as, e.g.,
those available in a LAPACK implementation, and truncating. However, this is quite costly,
with an asymptotic cost of $O(mnr)$ where $r = \min(m,n)$.
In contrast, all techniques presented here have an asymptotic cost of $O(mnk)$,
which represents a substantial savings when $k \ll \min(m,n)$.
Notice that both the two sided \ID~and \CUR~factorizations can be computed
efficiently once a single sided \ID~has been computed, so we concern our discussion and
analysis on using randomized sampling for computing the rank $k$ \SVD~and the
rank $k$ \ID~decompositions.

The idea behind randomized algorithms for constructing low rank approximations
to a matrix is to apply the desired factorization to a smaller matrix, derived
from the original matrix. We first discuss the use of randomization for constructing
an approximate low rank \SVD, which we elaborate more on in
\ref{sec:randomized_algorithms_for_low_rank_svd}. Given $\mtx{A} \in \mathbb{R}^{m \times n}$,
we can construct samples of the column space of $\mtx{A}$ via the
computation $\mtx{Y} = \mtx{A} \mtx{\Omega}$ with $\mtx{\Omega}$ an $n \times (k+p)$
Gaussian random matrix so that $\mtx{Y}$ is of size $m \times (k+p)$ with $k$ the desired
rank and $p$ a small oversampling parameter, the use of which
considerably improves the approximation error.
We can then construct a matrix with orthonormal columns (ON) $\mtx{Q}$ of size
$m \times (k+p)$ via a \QR~factorization of $\mtx{Y}$. If $\mtx{Y}$ captures
a good portion of the range of $\mtx{A}$ (assuming $k$ is sufficiently large relative
to the numerical rank of $\mtx{A}$),
then we expect that $\mtx{Q} \mtx{Q}^{*} \mtx{A} \approx \mtx{A}$. The idea
is to compute the factorization of the smaller $(k+p) \times n$
product matrix $\mtx{Q}^* \mtx{A}$ and then multiply the result by $\mtx{Q}$ to
obtain an approximation of $\mtx{A}$.
Note that while $\mtx{Q}^* \mtx{Q} = \mtx{I}$, the matrix
product $\mtx{Q} \mtx{Q}^*$ multiplies out to
the identity only in the case that $\mtx{Q}$ is a square orthogonal matrix.
The product $\mtx{P} = \mtx{Q} \mtx{Q}^*$ is a projector onto the range of
$\mtx{Q}$ and hence onto that of $\mtx{Y}$ (where we assume that $\mtx{Q}$ is
obtained via a compact \QR~factorization of $\mtx{Y}$). To verify this, it is
easy to show that $\mtx{P}^2 = \mtx{P}$ and for any
$\vct{v} \in \range(\mtx{Q})$, $\mtx{P}\vct{v} = \vct{v}$. In the case
that $\mtx{Y}$ (and hence $\mtx{Q}$ obtained from $\mtx{Y}$) captures the entire range
of $\mtx{A}$, we have equality of
$\mtx{Q} \mtx{Q}^* \mtx{A}$ and $\mtx{A}$, per the lemma below:
\begin{lemma}
\label{lemma:rangeQ}
Let $\mtx{A} \in \mathbb{R}^{m \times n}$ and let $\mtx{Q} \in \mathbb{R}^{m \times r}$ be an orthonormal
matrix. Then the following are equivalent:
\begin{itemize}
\item[(1)]$\range(\mtx{A}) \subseteq \range(\mtx{Q})$
\item[(2)]$\mtx{A} = \mtx{Q} \mtx{Q}^* \mtx{A}$
\end{itemize}
\end{lemma}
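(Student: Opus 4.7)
The plan is to prove the equivalence by establishing the two implications $(1) \Rightarrow (2)$ and $(2) \Rightarrow (1)$ directly, using only the defining property $\mtx{Q}^{*}\mtx{Q} = \mtx{I}_{r}$ of an orthonormal matrix and the elementary characterization of $\range(\cdot)$ via column expansions. Neither direction should require any machinery beyond what is already available in the excerpt.

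For $(1) \Rightarrow (2)$, I would argue column by column. Let $\vct{a}_{j}$ denote the $j$-th column of $\mtx{A}$. The hypothesis $\range(\mtx{A}) \subseteq \range(\mtx{Q})$ says that every such $\vct{a}_{j}$ can be written as $\vct{a}_{j} = \mtx{Q}\vct{c}_{j}$ for some coefficient vector $\vct{c}_{j} \in \mathbb{R}^{r}$. Then
\[
\mtx{Q}\mtx{Q}^{*}\vct{a}_{j} = \mtx{Q}\mtx{Q}^{*}\mtx{Q}\vct{c}_{j} = \mtx{Q}\vct{c}_{j} = \vct{a}_{j},
\]
where the middle equality uses $\mtx{Q}^{*}\mtx{Q} = \mtx{I}_{r}$. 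Reassembling the columns yields $\mtx{Q}\mtx{Q}^{*}\mtx{A} = \mtx{A}$.

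For $(2) \Rightarrow (1)$, I would simply unpack the definition of $\range(\mtx{A})$. Any $\vct{y} \in \range(\mtx{A})$ takes the form $\vct{y} = \mtx{A}\vct{x}$ for some $\vct{x} \in \mathbb{R}^{n}$; substituting the identity in (2) gives $\vct{y} = \mtx{Q}\mtx{Q}^{*}\mtx{A}\vct{x} = \mtx{Q}(\mtx{Q}^{*}\mtx{A}\vct{x})$, which is manifestly a vector in $\range(\mtx{Q})$.

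There is essentially no obstacle here: the lemma is a restatement of the fact that $\mtx{Q}\mtx{Q}^{*}$ is the orthogonal projector onto $\range(\mtx{Q})$, a point the surrounding text has already noted. The only subtlety worth being careful about is to invoke $\mtx{Q}^{*}\mtx{Q} = \mtx{I}_{r}$ (which is the correct identity for an ON matrix with $r$ columns) rather than the false $\mtx{Q}\mtx{Q}^{*} = \mtx{I}_{m}$, which would only hold if $\mtx{Q}$ were square.
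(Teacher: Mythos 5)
Your proof is correct and follows essentially the same route as the paper's: both directions use only $\mtx{Q}^{*}\mtx{Q} = \mtx{I}_{r}$, and your column-by-column argument for $(1) \Rightarrow (2)$ is just the coordinate-wise version of the paper's step of writing $\mtx{A} = \mtx{Q}\mtx{S}$ (stacking your $\vct{c}_{j}$ side by side produces exactly the paper's $\mtx{S}$). No substantive difference.
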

\begin{proof}
Assume $(1)$ holds. Then this implies that there exists a matrix
$\mtx{S} \in \mathbb{R}^{r \times n}$
such that $\mtx{A} = \mtx{Q} \mtx{S}$. It follows that:
\begin{equation*}
\mtx{Q} \mtx{Q}^* \mtx{A} = \mtx{Q} \mtx{Q}^* \mtx{Q} \mtx{S} = \mtx{Q} \mtx{S} = \mtx{A},
\end{equation*}
since $\mtx{Q}^* \mtx{Q} = I$. Hence, $(1) \implies (2)$.
Next, assume $(2)$ holds. Then:
\begin{equation*}
\mtx{A} = \mtx{Q} \mtx{Q}^* \mtx{A} = \mtx{Q} \left( \mtx{Q}^* \mtx{A} \right) \implies \range(\mtx{A}) \subseteq \range(\mtx{Q}).
\end{equation*}
Hence, $(2) \implies (1)$.
\end{proof}

An extension of lemma \ref{lemma:rangeQ} states that when
$\mathcal{R}(\mtx{Q})$ is close to $\mathcal{R}(\mtx{A})$ (that is, when $\mtx{Q}$ captures
much of the range of $\mtx{A}$), then $\mtx{Q} \mtx{Q}^{*} \mtx{A} \approx \mtx{A}$. To make
this statement more precise, we summarize
here some results from \cite{2011_martinsson_randomsurvey}. Suppose we take a
GIID (Gaussian independent identically distributed) matrix $\mtx{\Omega}$ of size
$n \times l$, where $l = k + p$ with $k$ being the rank of
the approximation we seek and $p$ being a small oversampling parameter. We may
split the \SVD~of $\mtx{A}$ as:
\begin{equation*}
\mtx{A}
\quad = \quad \kbordermatrix{&
k\\
m &\mtx{U} } \ \kbordermatrix{&
k & n-k\\
&\mtx{\Sigma}_1& \mtx{0} \\ &\mtx{0}& \mtx{\Sigma}_2}
\kbordermatrix{
 &n
\\k&  \mtx{V}_1^{*}
\\n-k&  \mtx{V}_2^{*}
} \
\end{equation*}
Now let $\mtx{\Omega}_1 = \mtx{V}_1^* \mtx{\Omega}$ and $\mtx{\Omega}_2 = \mtx{V}_2^* \mtx{\Omega}$.
We may use these to write the sample matrix $\mtx{Y}$ and the corresponding ON matrix as:
\begin{equation*}
\mtx{Y}
\quad = \mtx{A} \mtx{\Omega} = \quad
\kbordermatrix{&
k\\
m &\mtx{U} } \
\kbordermatrix{&
l \\
k &\mtx{\Sigma}_1 \mtx{\Omega}_1 \\ n-k & \mtx{\Sigma}_2 \mtx{\Omega}_2} \quad, \quad \mtx{Q} = \orth(\mtx{A}).
\end{equation*}
Since $\range(\mtx{Q}) = \range(\mtx{Y})$ (\orth~is implemented via a compact \QR~factorization), the projector onto the
range of $\mtx{Y}$, $\mtx{P}_Y$, is equivalent to the projector onto the range of $\mtx{Q}$.
Hence, $\mtx{P}_Y = \mtx{Q} \mtx{Q}^*$ and
$\mtx{A} - \mtx{Q} \mtx{Q}^* \mtx{A} = \mtx{A} - \mtx{P}_Y \mtx{A} = (\mtx{I} - \mtx{P}_Y) \mtx{A}$.
In \cite{2011_martinsson_randomsurvey}, it is shown that if $\mtx{\Omega}_1$ has full row
rank then the approximation error satisfies:
\begin{equation}
\label{eq:py_error_bnd1}
\| (\mtx{I} - \mtx{P}_Y) \mtx{A} \| \leq \| \mtx{\Sigma}_2 \|^2 + \|\mtx{\Sigma}_2 \mtx{\Omega}_2 \mtx{\Omega}_1^{\dagger}\|^2,
\end{equation}
in both the spectral and Frobenius norms. In \eqref{eq:py_error_bnd1},
when $\mtx{A}$ has precisely rank $k$, then $\mtx{\Sigma}_2 = 0$ and the right hand side is zero so that
$\mtx{Q} \mtx{Q}^* \mtx{A} = \mtx{A}$ and so $\mtx{Y}$ will capture the range of $\mtx{A}$.
As $k$ approaches the rank of $\mtx{A}$, $\mtx{Y}$ will capture more and more of $\range(\mtx{A})$ and
the matrix product $\mtx{Q} \mtx{Q}^* \mtx{A}$
will approach $\mtx{A}$.
If we have an ON matrix $\mtx{Q}$ such that:
\begin{equation}
\label{eq:QQ_bound}
\|(\mtx{I} - \mtx{Q} \mtx{Q}^{*})\mtx{A}\| < \epsilon,
\end{equation}
then we can perform a factorization
of $\mtx{Q}^{*} \mtx{A}$ which is of size $(k+p) \times n$ and
multiply by $\mtx{Q}$ to obtain an approximate factorization of $\mtx{A}$ with the
same approximation error (in the same norm) as indicated by \eqref{eq:QQ_bound}.
Assuming $k \ll \min(m,n)$,
the matrix $\mtx{Q}^{*} \mtx{A}$ is substantially smaller than $\mtx{A}$.
For example, we can perform the SVD (of full rank $k+p$) of $\mtx{B} = \mtx{Q}^{*} \mtx{A}$
and then multiply by $\mtx{Q}$ to get an approximate SVD of $\mtx{A}$:
\begin{equation*}
\mtx{B} = \tilde{\mtx{U}} \mtx{D} \mtx{V}^* \implies \mtx{A} \approx (\mtx{Q} \tilde{\mtx{U}}) \mtx{D} \mtx{V}^*
\end{equation*}
Notice that instead of performing the SVD of $\mtx{B}$, we can also perform e.g. a
pivoted QR factorization of $\mtx{B}$ to get an approximate pivoted QR
factorization of $\mtx{A}$:
\begin{equation*}
\mtx{B} \mtx{P} = \tilde{\mtx{Q}} \mtx{R} \implies \mtx{A} \mtx{P} \approx (\mtx{Q} \tilde{\mtx{Q}}) \mtx{R}
\end{equation*}
For either approximation, the bound $\|(\mtx{I} - \mtx{Q} \mtx{Q}^{*})\mtx{A}\|$
is the error bound between the obtained factorization and
the original matrix, since the factorization of $\mtx{B} = \mtx{Q}^* \mtx{A}$ is exact.
In the software we provide, we implement two types of randomized routines: plain randomized
and block randomized. The plain randomized routines form $\mtx{Q}$ out of the matrix
of samples $\mtx{Y}$ using a single \QR~factorization of the matrix
$\mtx{Y} = \mtx{A} \mtx{\Omega}$ (or $\left(\mtx{A} \mtx{A}^*\right)^q \mtx{A}$, as
we discuss in \ref{sec:power_scheme}). This can involve operations with large matrices
depending on the desired rank $k$. The block randomized routines form the
matrices $\mtx{Q}$ and $\mtx{B}$ in a blocked fashion using multiple \QR~factorizations
and matrix multiplications of smaller matrices controlled by a block size parameter,
by employing the iterative procedure from \cite{2015arXiv150307157M}.

\subsection{Column norm preservation}
Another useful aspect of randomized sampling concerns the preservation of column norm
variations in a matrix. Suppose $\mtx{A}$ is $m \times n$ and we draw an
$l \times m$ GIID matrix $\tilde{\mtx{\Omega}}$. Suppose we then form
the $l \times n$ matrix $\mtx{Z} = \tilde{\mtx{\Omega}} \mtx{A}$.
We can then derive the following
expectation result relating the column norms of $\mtx{A}$ and $\mtx{Z}$:
\begin{equation}
\label{eq:col_norm_relations}
E\left[\frac{\|\mtx{Z}(:,j)\|^2}{\|\mtx{A}(:,j)\|^2}\right] = l.
\end{equation}
The result follows from the following lemma, where we make use of the 
construction mentioned in \cite{duersch2015true}.
\begin{lemma}
\label{lem:Omega_a}
Let $\tilde{\mtx{\Omega}} \in \mathbb{R}^{l \times m}$ be a matrix with GIID entries. Then
for any $\vct{a} \in \mathbb{R}^m$ we have that
$\mathrm{E}[\frac{\|\tilde{\mtx{\Omega}} \vct{a}\|^2}{\|\vct{a}\|^2}] = l$ and 
$\mathrm{Var}[\frac{\|\tilde{\mtx{\Omega}} \vct{a}\|^2}{\|\vct{a}\|^2}] = 2l$.
\end{lemma}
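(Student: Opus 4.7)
The plan is to reduce the statement to a chi-squared distribution computation. Write $\tilde{\mtx{\Omega}}$ in terms of its rows, so $\tilde{\mtx{\Omega}} = [\tilde{\vct{\omega}}_1 \ \tilde{\vct{\omega}}_2 \ \cdots \ \tilde{\vct{\omega}}_l]^{*}$, where each $\tilde{\vct{\omega}}_i \in \mathbb{R}^m$ has iid $\mathbb{N}(0,1)$ entries and the rows are mutually independent. Then
\begin{equation*}
\|\tilde{\mtx{\Omega}} \vct{a}\|^2 = \sum_{i=1}^{l} (\tilde{\vct{\omega}}_i^{*} \vct{a})^2.
\end{equation*}
The key observation is that for each $i$, the scalar $X_i := \tilde{\vct{\omega}}_i^{*} \vct{a} = \sum_{j=1}^m \tilde{\Omega}_{ij} a_j$ is a linear combination of independent $\mathbb{N}(0,1)$ variables, hence Gaussian with mean $0$ and variance $\sum_j a_j^2 = \|\vct{a}\|^2$.

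Consequently, $X_i/\|\vct{a}\|$ is $\mathbb{N}(0,1)$, and by independence across rows the standardized squared norm
\begin{equation*}
\frac{\|\tilde{\mtx{\Omega}} \vct{a}\|^2}{\|\vct{a}\|^2} = \sum_{i=1}^{l} \left(\frac{X_i}{\|\vct{a}\|}\right)^2
\end{equation*}
is a sum of $l$ iid squared standard normals, i.e., a $\chi^2_l$ random variable. The remainder of the proof is just to quote the standard moment formulas $\mathrm{E}[\chi^2_l] = l$ and $\mathrm{Var}[\chi^2_l] = 2l$; if one prefers a self-contained derivation, the mean follows from linearity of expectation applied to $\mathrm{E}[Z^2] = 1$ for $Z \sim \mathbb{N}(0,1)$, while the variance follows from independence and $\mathrm{Var}[Z^2] = \mathrm{E}[Z^4] - 1 = 3 - 1 = 2$.

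I do not anticipate any real obstacle here; the only subtle point is to recognize at the start that $\|\vct{a}\|$ plays the role of a pure scale factor, which is why the ratio in the statement is distribution-free with respect to $\vct{a}$. One could alternatively invoke rotational invariance of the Gaussian distribution to reduce to the case $\vct{a} = \|\vct{a}\| \vct{e}_1$ and then read off the result directly, but the row-by-row calculation above is more elementary and avoids needing to argue about the joint distribution of $\tilde{\mtx{\Omega}}$ under right multiplication by an orthogonal matrix.
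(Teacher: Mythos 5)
Your proof is correct, and it does take a genuinely different route than the paper. The paper's argument uses rotational invariance: it extends $\vct{u}_a = \vct{a}/\|\vct{a}\|$ to a full orthogonal matrix $\tilde{\mtx{Q}} = [\vct{u}_a \ \mtx{Q}_{\perp}]$, observes that $\mtx{W} = \tilde{\mtx{\Omega}}\tilde{\mtx{Q}}$ is again GIID (the invariance of the Gaussian ensemble under right multiplication by an orthogonal matrix), computes $\tilde{\mtx{Q}}^{*}\vct{a} = \|\vct{a}\|\vct{e}_1$, and concludes $\tilde{\mtx{\Omega}}\vct{a} = \|\vct{a}\|\,\mtx{W}(:,1)$, whose normalized squared norm is $\chi^2_l$. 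You instead expand $\|\tilde{\mtx{\Omega}}\vct{a}\|^2$ row by row, note that each inner product $\tilde{\vct{\omega}}_i^{*}\vct{a}$ is a one-dimensional Gaussian with variance $\|\vct{a}\|^2$ (by the stability of Gaussians under linear combinations), and obtain $\chi^2_l$ directly from independence of the rows. Both arrive at the same $\chi^2_l$ identification and then quote its mean and variance. Your route is more elementary: it avoids constructing an orthogonal complement and avoids invoking (or re-proving) that $\tilde{\mtx{\Omega}}\tilde{\mtx{Q}}$ is GIID, a fact the paper states but does not justify. The paper's rotational-invariance argument is slightly heavier machinery for this particular lemma, though it is the template that generalizes more readily to statements about Gaussian matrices composed with orthogonal projections, which is presumably why the authors (following the cited source) prefer it. You also correctly anticipated exactly this tradeoff in your closing remark.
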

\begin{proof}
Since $\tilde{\mtx{\Omega}}$ has GIID entries, $\tilde{\mtx{\Omega}}_{ij} \sim \mathbb{N}(0,1)$
and hence, $\|\tilde{\mtx{\Omega}}(:,j)\|^2 = \sum_{i=1}^l  \tilde{\mtx{\Omega}}^2_{ij}$,
being the sum of squares of $m$ iid Gaussian random variables is distributed
as $\chi^2_m$ (Chi-squared distribution of degree $m$). It follows that
$E[\|\tilde{\mtx{\Omega}} \vct{e}_j\|^2] = m$. Note that if
$\tilde{\mtx{Q}} = \mathbb{R}^{m \times m}$
is an orthogonal matrix (that, is $\tilde{\mtx{Q}} \tilde{\mtx{Q}}^{*} = \tilde{\mtx{Q}}^{*} \tilde{\mtx{Q}} = \mtx{I}$),
then the matrix $\mtx{W} = \tilde{\mtx{\Omega}} \tilde{\mtx{Q}}$ which is $l \times n$,
is also GIID.
This means that $\mtx{W}_{ij} \sim \mathbb{N}(0,1)$. It follows that
$\|\mtx{W}_{(:,j)}\|^2 = \displaystyle\sum_{i=1}^l \mtx{W}_{ij}^2 \sim \chi^2_l$. Hence,
$E\left[ \|\mtx{W}_{(:,j)}\|^2 \right] = l$. Now let $\vct{u}_a$ be the unit vector
corresponding to $\vct{a}$, that is $\vct{u}_a = \frac{\vct{a}}{\|\vct{a}\|}$ and let
$\mtx{Q}_{\perp} \in \mathbb{R}^{m \times (m-1)}$ be a complement giving a full orthogonal
basis so that $\tilde{\mtx{Q}} = \begin{bmatrix}\vct{u}_a & \mtx{Q}_{\perp}\end{bmatrix}$ is an
orthogonal matrix. Since
$\tilde{\mtx{Q}}^{*} \tilde{\mtx{Q}} = I$, we must have:
\begin{equation*}
\tilde{\mtx{Q}}^{*} \tilde{\mtx{Q}} = \begin{bmatrix} \vct{u}_a^{*} \\ \mtx{Q}_{\perp}^{*} \end{bmatrix} \begin{bmatrix} \vct{u}_a & \mtx{Q}_{\perp} \end{bmatrix} = \begin{bmatrix} \vct{u}_a^{*} \vct{u}_a & \vct{u}_a^{*} \mtx{Q}_{\perp} \\
\mtx{Q}_{\perp}^{*} \vct{u}_a & \mtx{Q}_{\perp}^{*} \mtx{Q}_{\perp} \end{bmatrix} =
\begin{bmatrix} \vct{1} & \vct{0} \\ \vct{0} & \mtx{I}_{m-1} \end{bmatrix}
\end{equation*}
It follows that:
\begin{equation*}
\tilde{\mtx{\mtx{Q}}}^{*} \vct{a} = \begin{bmatrix} \vct{u}_a^{*} \vct{a} \\ \mtx{\mtx{Q}}_{\perp}^{*} \vct{a} \end{bmatrix} =
\begin{bmatrix} \vct{u}_a^{*} \vct{a} \\ \vct{0} \end{bmatrix} = \begin{bmatrix} \|\vct{a}\|_2 \\ \vct{0} \end{bmatrix}
= \|\vct{a}\|_2 \vct{e}_1 \quad \mbox{where} \quad \vct{e}_1 = \begin{bmatrix} \vct{1} \\ \vct{0} \end{bmatrix}
\end{equation*}
Finally, we look at the quotient $\frac{\|\tilde{\mtx{\Omega}} \vct{a}\|_2}{\|\vct{a}\|_2}$. The
numerator $\tilde{\mtx{\Omega}} \vct{a} = \tilde{\mtx{\Omega}} \tilde{\mtx{\mtx{Q}}} \tilde{\mtx{\mtx{Q}}}^{*}\vct{a} = \mtx{W} \tilde{\mtx{\mtx{Q}}}^{*} \vct{a}$
where $\mtx{W} = \tilde{\mtx{\Omega}} \tilde{\mtx{\mtx{Q}}}$ is GIID since
$\tilde{\mtx{\mtx{Q}}}$ is orthogonal
and $\tilde{\mtx{\mtx{Q}}}^{*} \vct{a} = \|\vct{a}\|_2 \vct{e}_1$. Thus,
$\tilde{\mtx{\Omega}} \vct{a} = \mtx{W} \tilde{\mtx{\mtx{Q}}}^{*} \vct{a} =
\mtx{W} \|\vct{a}\|_2 \vct{e}_1 = \|\vct{a}\|_2 \mtx{W}_{(:,1)}$. It follows that:
\begin{equation*}
\frac{\|\tilde{\mtx{\Omega}} \vct{a}\|^2}{\|\vct{a}\|^2} = \|\mtx{W}_{(:,1)}\|^2 \sim \chi_l^2 \quad \implies \quad
\mathrm{E}\left[\frac{\|\tilde{\mtx{\Omega}} \vct{a}\|^2}{\|\vct{a}\|^2}\right] = l, \quad \mathrm{Var}\left[\frac{\|\tilde{\mtx{\Omega}} \vct{a}\|^2}{\|\vct{a}\|^2}\right] = 2l.
\end{equation*}
\end{proof}
From Lemma \ref{lem:Omega_a}, with $\vct{a} = \mtx{A} \vct{e}_j = \mtx{A}(:,j)$,
relation \eqref{eq:col_norm_relations} follows. By virtue of this relation, we have that 
\begin{equation*}
\mathrm{E}\left[\|\mtx{Z}(:,i) - \mtx{Z}(:,j)\|^{2}\right] = l\,\|\mtx{A}(:,i) - \mtx{A}(:,j)\|^{2},
\end{equation*}
so that the smaller matrix $\mtx{Z} = \tilde{\mtx{\Omega}} \mtx{A}$ derived from $\mtx{A}$ 
preserves the column norm variations
of the original matrix $\mtx{A}$, up to a constant multiple dependent on $l$.
At least nominally, this explains why the pivoting matrix in a pivoted \QR~decomposition of 
$\mtx{Z}$ is expected to also work for the larger $\mtx{A}$, which provides some motivation for
the randomized \ID~algorithm which we later present.

\subsection{Power sampling scheme}
\label{sec:power_scheme}
In our previous discussion in \ref{sec:randomized_algorithms},
the matrix $\mtx{Q}$ which we use to project the
matrix $\mtx{A}$ into a lower dimensional space is constructed via compact \QR~factorization
of $\mtx{Y}$:
\begin{equation*}
\mtx{Y} = \mtx{A} \mtx{\Omega} \rightarrow \mtx{Q} = \orth(\mtx{Y}).
\end{equation*}

We now describe a so called \textit{power sampling scheme}, which improves the error bound
on the low rank approximation when the tail singular values
(i.e. $\sigma_{k+1}, \dots, \sigma_{r}$) are significant.
It is also helpful in cases where a matrix $\mtx{A}$ with singular values
$\sigma_1,\dots,\sigma_r$ has one or more large sequence
of singular values $\sigma_{i_1},\dots,\sigma_{i_p}$ with
$1 \leq i_1 \leq i_p \leq r$,
where the singular values decrease slowly in magnitude. In such cases, it substantially
helps to use a power scheme when sampling the range of the matrix $\mtx{A}$.
Instead of forming $\mtx{A} \mtx{\Omega}$, we form the matrix
$\mtx{Y} = \left( (\mtx{A} \mtx{A}^{*})^q \mtx{A} \right) \mtx{\Omega}$, where $q\geq1$ is an integer parameter.
We then set $\mtx{Q} = \orth(\mtx{Y})$.
Note that $\mtx{A}$ and $(\mtx{A} \mtx{A}^{*})^q \mtx{A}$ have the same eigenvectors
and related eigenvalues. Plugging in the SVD $\mtx{A} = \mtx{U} \mtx{\Sigma} \mtx{V}^{*}$, we have:
\begin{eqnarray*}
&& \mtx{A} \mtx{A}^{*} = \mtx{U} \mtx{\Sigma}^2 \mtx{U}^{*} \implies (\mtx{A} \mtx{A}^{*})^2 = \mtx{U} \mtx{\Sigma}^2 \mtx{U}^{*} \mtx{U} \mtx{\Sigma}^2 \mtx{U}^{*} = \mtx{U} \mtx{\Sigma}^4 \mtx{U}^{*} \implies (\mtx{A} \mtx{A}^{*})^q = \mtx{U} \mtx{\Sigma}^{2q} \mtx{U}^{*}  \\
&& \implies (\mtx{A} \mtx{A}^{*})^q \mtx{A} = \mtx{U} \mtx{\Sigma}^{2q} \mtx{U}^{*} \mtx{U} \mtx{\Sigma} \mtx{V}^{*} = \mtx{U} \mtx{\Sigma}^{2q + 1} \mtx{V}^{*}
\end{eqnarray*}
When $\mtx{A}$ is such that its trailing singular values decay slowly, the
matrix $(\mtx{A} \mtx{A}^{*})^q \mtx{A}$ with $q \geq 1$ has significantly faster
rate of singular value decay.

The matrix $\mtx{Z} = (\mtx{A} \mtx{A}^{*})^q \mtx{A} \mtx{\Omega}$ can be built up by means of the
following iterative procedure:
\begin{center}
\begin{minipage}{100mm}
\begin{tabbing}
\= \hspace{10mm} \= \hspace{5mm} \= \hspace{5mm} \= \hspace{5mm} \\ \kill
\> (1) \> $\mtx{Y} = \mtx{A} \mtx{\Omega}$\\
\> (2) \> \textbf{for} $i = 1:q$\\
\> (3) \> \> $\mtx{Y} \leftarrow \mtx{A}^{*} \mtx{Y}$\\
\> (4) \> \> $\mtx{Y} \leftarrow \mtx{A} \mtx{Y}$\\
\> (5) \> \textbf{end}
\end{tabbing}
\end{minipage}
\end{center}
In practice, we may wish to orthonormalize before multiplications with $\mtx{A}$ to prevent
repeatedly multiplying matrices having singular values greater than one. In cases
where very high computational precision is required (higher than
$\epsilon_{mach}^{\frac{1}{2q + 1}}$, where $\epsilon_{mach}$ is the machine
precision \cite{2015arXiv150307157M}), one
typically needs to orthonormalize the sampling matrix in
between each multiplication, resulting in the scheme:
\begin{center}
\begin{minipage}{100mm}
\begin{tabbing}
\= \hspace{10mm} \= \hspace{5mm} \= \hspace{5mm} \= \hspace{5mm} \\ \kill
\> (1) \> $\mtx{Y} = \mtx{A} \mtx{\Omega}$\\
\> (2) \> \textbf{for} $i = 1:q$\\
\> (3) \> \> $\mtx{Y} \leftarrow \mtx{A}^{*} \texttt{orth}(\mtx{Y})$\\
\> (4) \> \> $\mtx{Y} \leftarrow \mtx{A} \texttt{orth}(\mtx{Y})$\\
\> (5) \> \textbf{end}
\end{tabbing}
\end{minipage}
\end{center}
In many cases, $\orth$ does not need to be performed twice at each iteration. In the software, we use a parameter $s$
which controls how often the orthogonalization is done ($s = 1$ corresponds to the
above case, $s=2$ corresponds to doing one $\texttt{orth}$ operation per iteration, and
greater values for $s$ correspond to correspondingly less frequent orthogonalization).
In the randomized algorithm we present for the \ID~computation, we multiply by a GIID
matrix from the left. A similar power scheme in that case (which we later discuss)
also offers similar benefits.

\subsection{Adaptive rank approximation algorithms}
\label{sec:adaptive_rank_approx}
A major challenge in constructing suitable low rank approximations via randomized schemes is
in the construction of an ON matrix $\mtx{Q}$ such that
$\mtx{Q} \mtx{Q}^* \mtx{A} \approx \mtx{A}$, since the corresponding
decomposition can then be formed of the smaller matrix $\mtx{Q}^* \mtx{A}$. The simplest
approach to follow is to do as discussed before, forming $\mtx{Y} = \mtx{A} \mtx{\Omega}$
(we do not mention here the use of the performance improving power sampling scheme,
to which we will return momentarily).
The problem is that without advanced knowledge of the singular value
distribution of $\mtx{A}$, it is hard to guess an optimal rank $k$ parameter in the
size of the $n \times (k+p)$ matrix $\mtx{\Omega}$. If $k$ is chosen too small
relative to the numerical rank of $\mtx{A}$, then given
$\mtx{Q} = \orth(\mtx{Y})$, obtained via a compact \QR~factorization of $\mtx{Y}$,
$\mtx{Q} \mtx{Q}^* \mtx{A}$ will not be close to $\mtx{A}$. If instead $k$ is chosen
too large, then the matrix $\mtx{B} = \mtx{Q}^* \mtx{A}$, once computed
(itself a possibly overly expensive operation because of the \QR~on needlessly large $\mtx{Y}$)
will be large and not much time and memory savings could be obtained by performing the
wanted factorization of $\mtx{B}$ instead of the original $\mtx{A}$.

One way to proceed, is to start with a small $\mtx{Y}$, and then increase the size of
$\mtx{Y}$ by a block of samples at a time (by appending to $\mtx{Y}$ a matrix
$\mtx{Y}_{\textrm{sm}} = \mtx{A} \mtx{\Omega}_{\textrm{sm}}$ where
$\mtx{\Omega}_{\textrm{sm}}$ is a small $n \times \textrm{kstep}$ random Gaussian matrix).
The new enlarged matrix $\mtx{Y} = [\mtx{Y}, \mtx{Y}_{\textrm{sm}}]$
can then be orthogonalized to obtain a larger $\mtx{Q}$ using e.g. $\mtx{Q} = qr(\mtx{Y},0)$.
One can stop when the generated $\mtx{Q}$ becomes large enough so that
$\|\mtx{Q}\mtx{Q}^* \mtx{A} - \mtx{A}\|$ becomes sufficiently small.
The main problem with this approach is its inefficiency, given that the \QR~factorization
needs to be applied repeatedly to an increasingly larger $\mtx{Y}$.

We now discuss two more efficient algorithms for the automatic construction of suitable matrices
$\mtx{Q}$ and $\mtx{B} = \mtx{Q}^{*} \mtx{A}$ given a parameter $\epsilon>0$.
The algorithms are presented side by side in Figure
\ref{fig:Algorithms1and2}, taken from \cite{2015arXiv150307157M}.
The first algorithm $\QB_1$ is the single vector version, where $\mtx{Q}$ is built up
a column at a time, and the second algorithm $\QB_2$ is a blocked method, where
$\mtx{Q}$ is built up more rapidly, using blocks of vectors at each step. In the second
algorithm, the power scheme is also employed.
For both methods, on exit, we have that \eqref{eq:QQ_bound} holds.
Let us first analyze the simpler single vector method.
\begin{lemma}
At the end of iteration $j$ of Algorithm $\QB_1$, we have:
\begin{equation}
\label{eq:adaptiveQ_algs_quantities}
\mtx{A}^{(j)} = (\mtx{I} - \mtx{Q}_j \mtx{Q}_j^*) \mtx{A} \quad \mbox{and} \quad
\mtx{B}_j = \mtx{Q}_j^* \mtx{A}
\end{equation}
\end{lemma}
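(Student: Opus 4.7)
The plan is to prove both identities simultaneously by induction on $j$, using the fact that at each iteration the newly produced column $\vct{q}_j$ is (by construction) orthogonal to all previous columns of $\mtx{Q}_{j-1}$.

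\textbf{Base case ($j=0$).} Before any iteration, the matrix $\mtx{Q}_0$ is empty, so $\mtx{Q}_0\mtx{Q}_0^{*}$ is the zero matrix, and $\mtx{A}^{(0)} = \mtx{A} = (\mtx{I} - \mtx{Q}_0\mtx{Q}_0^{*})\mtx{A}$. Similarly, $\mtx{B}_0$ is empty, vacuously equal to $\mtx{Q}_0^{*}\mtx{A}$.

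\textbf{Inductive step.} Assume \eqref{eq:adaptiveQ_algs_quantities} holds at the end of iteration $j-1$. In iteration $j$ of $\QB_1$, a new orthonormal vector $\vct{q}_j$ is generated from a sample of $\mtx{A}^{(j-1)}$, orthogonalized against $\mtx{Q}_{j-1}$, so that $\mtx{Q}_j = [\mtx{Q}_{j-1}\ \vct{q}_j]$ is orthonormal, i.e.\ $\vct{q}_j^{*}\mtx{Q}_{j-1} = \vct{0}$. The algorithm then appends a new row $\vct{b}_j^{*} = \vct{q}_j^{*}\mtx{A}^{(j-1)}$ to form $\mtx{B}_j = \left[\begin{smallmatrix}\mtx{B}_{j-1}\\ \vct{b}_j^{*}\end{smallmatrix}\right]$, and sets $\mtx{A}^{(j)} = \mtx{A}^{(j-1)} - \vct{q}_j\vct{b}_j^{*}$. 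The crux of the proof is the identity
\begin{equation*}
\vct{q}_j^{*}\mtx{A}^{(j-1)} \;=\; \vct{q}_j^{*}\bigl(\mtx{I} - \mtx{Q}_{j-1}\mtx{Q}_{j-1}^{*}\bigr)\mtx{A} \;=\; \vct{q}_j^{*}\mtx{A},
\end{equation*}
which uses the inductive hypothesis on $\mtx{A}^{(j-1)}$ together with $\vct{q}_j^{*}\mtx{Q}_{j-1} = \vct{0}$.

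Given this, the second identity follows immediately: the newly appended row is $\vct{q}_j^{*}\mtx{A}$, so
\begin{equation*}
\mtx{B}_j \;=\; \begin{bmatrix} \mtx{B}_{j-1} \\ \vct{q}_j^{*}\mtx{A}\end{bmatrix} \;=\; \begin{bmatrix} \mtx{Q}_{j-1}^{*}\mtx{A} \\ \vct{q}_j^{*}\mtx{A}\end{bmatrix} \;=\; \mtx{Q}_j^{*}\mtx{A}.
\end{equation*}
For the first identity, I write $\mtx{Q}_j\mtx{Q}_j^{*} = \mtx{Q}_{j-1}\mtx{Q}_{j-1}^{*} + \vct{q}_j\vct{q}_j^{*}$ and compute
\begin{equation*}
(\mtx{I} - \mtx{Q}_j\mtx{Q}_j^{*})\mtx{A} \;=\; \mtx{A}^{(j-1)} - \vct{q}_j(\vct{q}_j^{*}\mtx{A}) \;=\; \mtx{A}^{(j-1)} - \vct{q}_j(\vct{q}_j^{*}\mtx{A}^{(j-1)}) \;=\; \mtx{A}^{(j-1)} - \vct{q}_j\vct{b}_j^{*} \;=\; \mtx{A}^{(j)},
\end{equation*}
where the second equality reuses the crux identity above.

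There is no real obstacle; the only place where care is needed is noticing that the update rule stated in the pseudocode uses $\mtx{A}^{(j-1)}$ (the current residual) rather than $\mtx{A}$, so the proof must reconcile the two via the orthogonality $\vct{q}_j^{*}\mtx{Q}_{j-1} = \vct{0}$. Once that observation is in place, both identities collapse into a short algebraic verification.
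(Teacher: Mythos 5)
Your proof is correct and follows the same overall strategy as the paper's: induction on $j$, with the crux being the orthogonality $\vct{q}_j^{*}\mtx{Q}_{j-1} = \vct{0}$, which collapses the update rule $\mtx{A}^{(j)} = \mtx{A}^{(j-1)} - \vct{q}_j\vct{q}_j^{*}\mtx{A}^{(j-1)}$ into the claimed projector form and shows the appended row of $\mtx{B}$ equals $\vct{q}_j^{*}\mtx{A}$.

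The one place where your argument and the paper's diverge is in how that orthogonality is obtained. You attribute it to an explicit orthogonalization step in the algorithm ("orthogonalized against $\mtx{Q}_{j-1}$"). The paper instead derives it: since $\vct{q}_{j+1}$ is normalized from a sample in $\range(\mtx{A}^{(j)})$, and by the inductive hypothesis $\mtx{A}^{(j)} = (\mtx{I} - \mtx{Q}_j\mtx{Q}_j^{*})\mtx{A}$, one has $\vct{q}_{j+1} = (\mtx{I} - \mtx{Q}_j\mtx{Q}_j^{*})\vct{\mu}$ for some $\vct{\mu}$, whence $\mtx{Q}_j^{*}\vct{q}_{j+1} = \vct{0}$ automatically. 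This is a slightly stronger statement — it shows the orthogonality is a \emph{consequence} of working with the residual rather than an additional algorithmic step — and it is the version consistent with the pseudocode for $\QB_1$. Your conclusion and algebra are unaffected either way, but if $\QB_1$ does not reorthogonalize explicitly, your justification of $\vct{q}_j^{*}\mtx{Q}_{j-1} = \vct{0}$ should be replaced by the range argument above, which you already have all the ingredients for (it uses exactly the inductive hypothesis you state).
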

\begin{proof}
The results can be established by induction. Notice first that
$\|\vct{q}_j\| = 1 = \vct{q}_j^* \vct{q}_j$ for all $j$. Initially,
$\vct{q}_1 \in \range(\mtx{A})$, $\mtx{Q}_1 = [\vct{q}_1]$,
$\mtx{B}_1 = [\vct{b}_1] = [\vct{q}_1^* \mtx{A}] = \mtx{Q}_1^* \mtx{A}$.
After the first iteration,
$\mtx{A}^{(1)} = \mtx{A}^{(0)} - \vct{q}_1 \vct{b}_1 = \mtx{A} - \vct{q}_1 \vct{q}_1^* \mtx{A} = (\mtx{I} - \mtx{Q}_1 \mtx{Q}_1^*) \mtx{A}$.
Next, $\vct{q}_2 \in \range(\mtx{A}^{(1)}) = \range\left((\mtx{I} - \vct{q}_1 \vct{q}_1^*)\mtx{A}\right) \in \range(\mtx{I} - \vct{q}_1 \vct{q}_1^*)$ which
implies $\vct{q}_2 = (\mtx{I} - \vct{q}_1 \vct{q}_1^*)\vct{\mu}$ for some vector $\vct{\mu}$.
Thus, $\vct{q}_2^* \vct{q}_1 = \vct{\mu}^* (\mtx{I} - \vct{q}_1 \vct{q}_1^*) \vct{q}_1 = \vct{0}$. It follows that:
\begin{equation*}
\mtx{A}^{(2)} = \mtx{A}^{(1)} - \vct{q}_2 \vct{b}_2 = (\mtx{I} - \vct{q}_1 \vct{q}_1^*)\mtx{A} - \vct{q}_2 \vct{q}_2^* (\mtx{I} - \vct{q}_1 \vct{q}_1^*)\mtx{A} =
\mtx{A} - \vct{q}_1 \vct{q}_1^* \mtx{A} - \vct{q}_2 \vct{q}_2^* \mtx{A} =
(\mtx{I} - \mtx{Q}_2 \mtx{Q}_2^*)\mtx{A}.
\end{equation*}
In general, we have that $(\mtx{I} - \vct{q}_j \vct{q}_j^*) \perp \range(\vct{q}_j)$ and
that $\vct{q}_i \perp \vct{q}_j$ for $i \neq j$.
Let us assume that $\mtx{A}^{(j)} = (\mtx{I} - \mtx{Q}_j \mtx{Q}_j^*) \mtx{A}$.
It follows that:
\begin{eqnarray*}
\mtx{A}^{(j+1)} &=& \mtx{A}^{(j)} - \vct{q}_{(j+1)} \vct{q}^{*}_{(j+1)} \mtx{A}^{(j)} =
(\mtx{I} - \vct{q}_{(j+1)} \vct{q}^{*}_{(j+1)}) \mtx{A}^{(j)} =
(\mtx{I} - \vct{q}_{(j+1)} \vct{q}^{*}_{(j+1)}) (\mtx{I} - \mtx{Q}_j \mtx{Q}_j^*) \mtx{A} \\
&=& (\mtx{I} - \mtx{Q}_j \mtx{Q}_j^* - \vct{q}_{(j+1)} \vct{q}^{*}_{(j+1)}) \mtx{A} =
(\mtx{I} - \mtx{Q}_{j+1} \mtx{Q}_{j+1}^*) \mtx{A}
\end{eqnarray*}
Similarly, if we assume $\mtx{B}_j = \mtx{Q}_j^* \mtx{A}$, then we have:
\begin{equation*}
\mtx{B}_{(j+1)} = \begin{bmatrix} \mtx{Q}_j^* \mtx{A} \\ \vct{q}_{(j+1)}^* \mtx{A}^{(j)} \end{bmatrix}
= \begin{bmatrix} \mtx{Q}_j^* \mtx{A} \\ \vct{q}_{(j+1)}^* (\mtx{I} - \mtx{Q}_j \mtx{Q}_j^*) \mtx{A} = \vct{q}_{(j+1)}^* \mtx{A} \end{bmatrix} = \begin{bmatrix} \mtx{Q}_j^* \\ \vct{q}_{(j+1)}^* \end{bmatrix} \mtx{A} = \mtx{Q}_{(j+1)}^* \mtx{A}
\end{equation*}
\end{proof}
Notice that we quit Algorithm $\QB_1$ precisely when
$\|\mtx{A}^{(j)}\| < \epsilon$ is small, so since we have shown that
$\mtx{A}^{(j)} = (\mtx{I} - \mtx{Q}_j \mtx{Q}_j^*) \mtx{A}$, we have
on exit that \eqref{eq:QQ_bound} holds.

By a similar argument, it is proved in \cite{2015arXiv150307157M} that on output, the quantities
in \eqref{eq:adaptiveQ_algs_quantities} hold for the blocked scheme $\QB_2$.
The blocked algorithm is substantially accelerated over the non-blocked version, since
$\mtx{Q}$ is amended at each iteration by a block of column vectors.
\begin{figure}[ht!]
\centerline{
\includegraphics[scale=0.25]{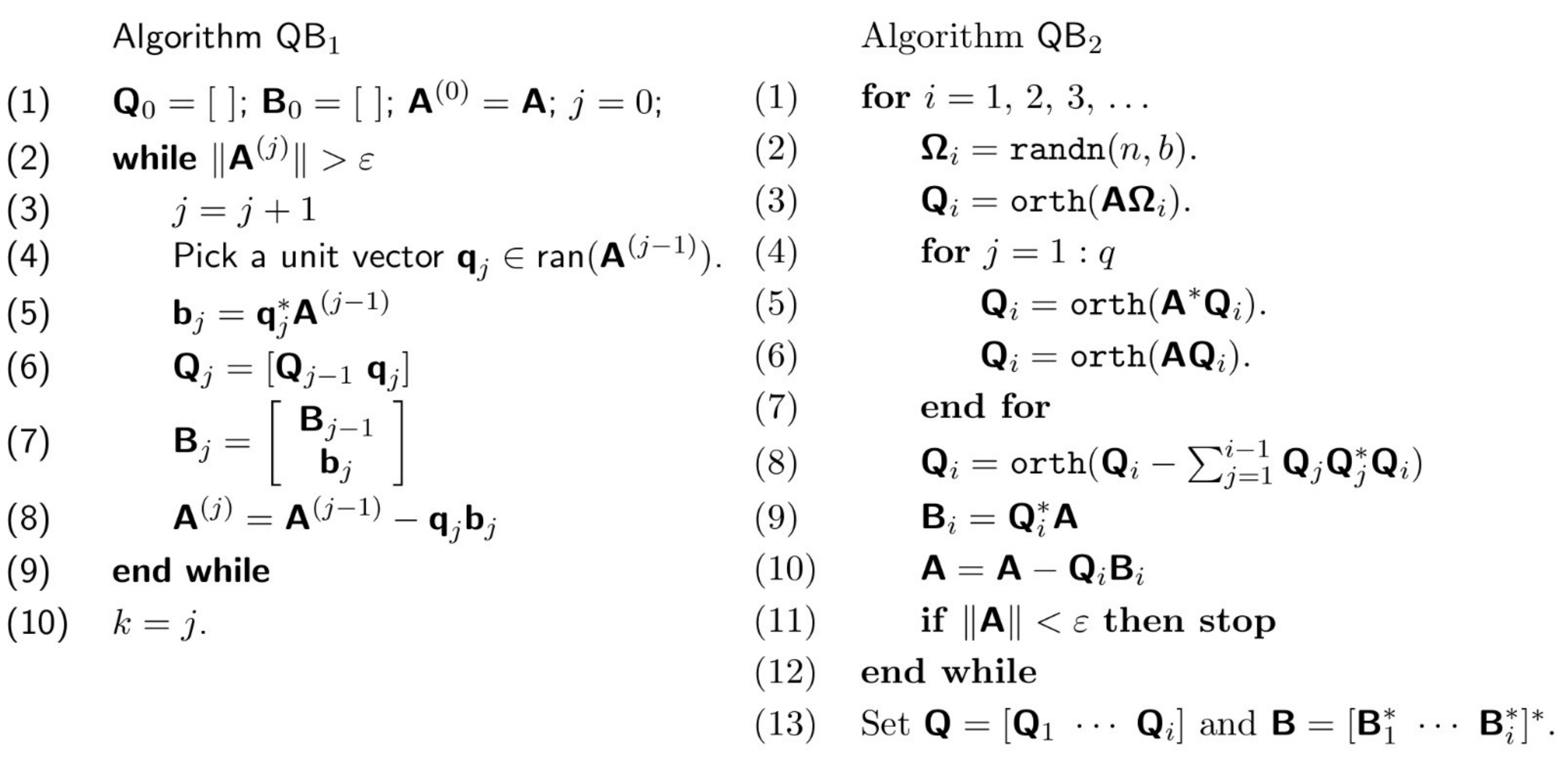}
}
\caption{Single vector and blocked algorithms for the adaptive construction of $\mtx{Q}$.\label{fig:Algorithms1and2}}
\end{figure}
The version we implement in RSVDPACK is based on this scheme,
giving a blocked algorithm for the construction
of matrices $\bar{\mtx{Q}}$ and $\bar{\mtx{B}}$, using a supplied tolerance $\epsilon > 0$. Here, we use the
\textit{bar} notation to represent a collection of block matrices,
where $\bar{\mtx{Q}}_i = \left[ \mtx{Q}_1, \dots, \mtx{Q}_i \right]$ and
$\bar{\mtx{B}}_i = \left[\mtx{B}^*_1 ; \dots ; \mtx{B}^*_i \right]^*$.
Pseudocode corresponding to scheme $\QB_2$ is shown in the appendix
as Algorithm \ref{algo:randpbQB2}.
Since the algorithm is blocked, the \orth~operation is performed only on matrices of $b$ columns,
with $b$ being the specified block size. The parameters $M$ and $\epsilon$ control how many
blocks are used: either the maximum specified by $M$ or when $\bar{\mtx{Q}}$ composed of a
certain number of blocks is large enough so
that $\|\bar{\mtx{Q}} \bar{\mtx{Q}}^{*} \mtx{A} - \mtx{A}\|$
becomes sufficiently small. Notice that the key to Algorithm \ref{algo:randpbQB2}
is to update the original matrix $\mtx{A}$. As we discuss below, this can be avoided
if desired with some accuracy tradeoffs. In Algorithm \ref{algo:randpbQB2},
if we assume $\mtx{Q}_i^{*} \mtx{Q}_j = 0$ for $i \neq j$ (which is proved in
\cite{2015arXiv150307157M}) and plug in $\mtx{B}_i = \mtx{Q}_i^{*} \mtx{A}^{(i)}$,
then at the $(i)$-th iteration, we obtain the recursive relations:
\begin{eqnarray*}
\mtx{A}^{(i)} &=& (\mtx{I} - \mtx{Q}_i \mtx{Q}_i^{*})\mtx{A}^{(i-1)} =
(\mtx{I} - \mtx{Q}_i \mtx{Q}_i^{*}) (\mtx{I} - \mtx{Q}_{i-1} \mtx{Q}_{i-1}^{*}) \mtx{A}^{(i-2)} =
(\mtx{I} - \mtx{Q}_i \mtx{Q}_i^{*} - \mtx{Q}_{i-1} \mtx{Q}_{i-1}^{*}) \mtx{A}^{(i-2)} \\
&=&\dots = (\mtx{I} - \bar{\mtx{Q}}_i \bar{\mtx{Q}}_i^*) \mtx{A}
\end{eqnarray*}
Thus, this implies that on line $(10)$ of Algorithm \ref{algo:randpbQB2}, we check
the condition $\|(\mtx{I} - \bar{\mtx{Q}}_i \bar{\mtx{Q}}_i^*) \mtx{A}\| < \epsilon$, so that if
we make $M$ large enough, we quit the loop precisely when
$\bar{\mtx{Q}}_i \bar{\mtx{Q}}_i^* \mtx{A} \approx \mtx{A}$ as in
\eqref{eq:QQ_bound}. The reorthonormalization procedure on line $(8)$ of Algorithm \ref{algo:randpbQB2} is
necessary to avoid loss of accuracy, but does not necessarily need to be performed
at each iteration.

In practice, the blocked scheme in Algorithm \ref{algo:randpbQB2} can often be
substantially simplified and still yield accurate results, particularly when $\mtx{A}$
has non-linear decay of its singular values. The main drawback of the scheme is
that the algorithm is essentially sequential, although each individual matrix-matrix
operation can of course take advantage of parallel processing. Another
disadvantage is that Algorithm \ref{algo:randpbQB2} requires the explicit updating
of the matrix $\mtx{A}$, which requires either to modify $\mtx{A}$ or make a copy of $\mtx{A}$
as another variable, which may not be desirable when $\mtx{A}$ is large. Algorithm
\ref{algo:randpbQB3} is a version which consists of several for loops, the first
two of which can be performed in parallel. This version also never requires
the \textit{orth} operation to be performed on a large matrix and never requires the
updating of $\mtx{A}$. In place of a big \QR, we perform a projection
operation on line $(13)$, which in practice, has the tendency to keep the columns
of $\mtx{Q}$ almost orthonormal with respect to each other. Algorithm \ref{algo:randpbQB2}
is more accurate than Algorithm \ref{algo:randpbQB3} and has the advantage that it can
satisfy \eqref{eq:QQ_bound} to a given $\varepsilon$ tolerance, but when the singular values
of $\mtx{A}$ decay rapidly, both methods give plausible results.

For very large matrices, we can make use of a blocked scheme.
We can proceed by subdividing $\mtx{A}$ into blocks along the rows. We assume here
that the number of blocks is a power of two. Without loss of generality, we assume
the use of four blocks:
\begin{equation*}
\mtx{A} = \begin{bmatrix}
\mtx{A}_1 \\
\mtx{A}_2 \\
\mtx{A}_3 \\
\mtx{A}_4
\end{bmatrix} \approx
\begin{bmatrix}
\mtx{Q}_1 \mtx{B}_1 \\
\mtx{Q}_2 \mtx{B}_2 \\
\mtx{Q}_3 \mtx{B}_3 \\
\mtx{Q}_4 \mtx{B}_4
\end{bmatrix} =
\begin{bmatrix}
\mtx{Q}_1 & 0 & 0 &0 \\
0 & \mtx{Q}_2 & 0 &0 \\
0 & 0 & \mtx{Q}_3 &0 \\
0 & 0 & 0 & \mtx{Q}_4 \\
\end{bmatrix}
\begin{bmatrix}
\mtx{B}_1 \\
\mtx{B}_2 \\
\mtx{B}_3 \\
\mtx{B}_4
\end{bmatrix}
\end{equation*}
We then perform \QB~factorizations on the blocks of the $\mtx{B}$ matrix:
\begin{equation*}
\mtx{M}^{(1)} = \begin{bmatrix} \mtx{B}_1 \\ \mtx{B}_2 \end{bmatrix} \approx \mtx{Q}_{12} \mtx{B}_{12} \quad \mbox{;} \quad \mtx{M}^{(2)} = \begin{bmatrix} \mtx{B}_3 \\ \mtx{B}_4 \end{bmatrix} \approx \mtx{Q}_{34} \mtx{B}_{34}
\end{equation*}
Finally, we perform a \QB~factorization on:
\begin{equation*}
\mtx{M}^{(3)} = \begin{bmatrix}
\mtx{B}_{12} \\
\mtx{B}_{34}
\end{bmatrix} \approx \mtx{Q}_{1234} \mtx{B}_{1234}
\end{equation*}
It follows that:
\begin{eqnarray*}
\mtx{A} &\approx& \begin{bmatrix}
\mtx{Q}_1 & 0 & 0 &0 \\
0 & \mtx{Q}_2 & 0 &0 \\
0 & 0 & \mtx{Q}_3 &0 \\
0 & 0 & 0 & \mtx{Q}_4 \\
\end{bmatrix}
\begin{bmatrix}
\mtx{Q}_{12} & 0 \\
0 & \mtx{Q}_{34}
\end{bmatrix}
\begin{bmatrix}
\mtx{B}_{12} \\
\mtx{B}_{34}
\end{bmatrix} \approx \begin{bmatrix}
\mtx{Q}_1 & 0 & 0 &0 \\
0 & \mtx{Q}_2 & 0 &0 \\
0 & 0 & \mtx{Q}_3 &0 \\
0 & 0 & 0 & \mtx{Q}_4 \\
\end{bmatrix}
\begin{bmatrix}
\mtx{Q}_{12} & 0 \\
0 & \mtx{Q}_{34}
\end{bmatrix} \mtx{Q}_{1234} \mtx{B}_{1234} \\
&=& \mtx{Q}^{(3)} \mtx{Q}^{(2)} \mtx{Q}^{(1)} \mtx{B}^{(1)} = \mtx{Q} \mtx{B}
\end{eqnarray*}
The benefit of this formulation is that the \QB~algorithm can be performed on smaller matrices
in parallel. In particular, we handle the decompositions of blocks $\mtx{A}_1,\dots,\mtx{A}_4$
in parallel, following which we can do in parallel the decompositions of matrices $\mtx{M}^{(1)}$
and $\mtx{M}^{(2)}$ and finally that of $\mtx{M}^{(3)}$. The reason for blocking $\mtx{A}$ along
the rows instead of columns is that we would like to keep the orthonormality of the
resulting matrix $\mtx{Q}$ which is done by working with block diagonal matrices.
Notice that the later factorizations of smaller matrices, can be replaced by full rank (library routine callable) \QR~factorizations in place of low rank \QB s. This hierarchical procedure is 
not implemented in RSVDPACK but is being explored as part of upcoming work.

\subsection{Randomized algorithms for the low rank \SVD}
\label{sec:randomized_algorithms_for_low_rank_svd}
For computing the low rank \SVD~of rank $k$ of matrix $\mtx{A} \in \mathbb{R}^{m \times n}$, the
basic randomized algorithm consists of the following steps:
\begin{itemize}
\item Form GIID $\mtx{\Omega} \in \mathbb{R}^{n \times l}$ with $l = k + p$.
\item Form sample matrix $\mtx{Y}$ of size $m \times l$ via $\mtx{Y} = \mtx{A} \mtx{\Omega}$.
\item Orthogonalize this set of samples forming the matrix $\mtx{Q} = \textrm{orth}(\mtx{Y})$.
\item Project the original matrix into a lower dimensional one: $\mtx{B} = \mtx{Q}^{*} \mtx{A}$,
where $\mtx{B}$ is $l \times n$, substantially smaller than $\mtx{A}$, which is $m \times n$.
\item Compute the SVD of the smaller matrix
$\mtx{B} = \tilde{\mtx{U}} \mtx{\mtx{\Sigma}} \mtx{V}^{*}$.
\item Form $\mtx{U} = \mtx{Q} \tilde{\mtx{U}}$.
\item Form the component matrices of the approximate rank-$k$ SVD of $\mtx{A}$ by setting:
\begin{equation*}
\mtx{U}_k = \mtx{U}(:,1:k), \mtx{\mtx{\Sigma}}_k = \mtx{\mtx{\Sigma}}(1:k,1:k), \mtx{V}_k = \mtx{V}(:,1:k),
\end{equation*}
so that the product $\mtx{U}_k \mtx{\mtx{\Sigma}}_k \mtx{V}^{*}_k \approx \mtx{A}$.
\end{itemize}
The first modification of the original algorithm
computes $\mtx{U}_k$ and $\mtx{V}_k$ without having to take the
\SVD~of the $l \times n$ matrix $\mtx{B}$, which may still be large if $n$ (the
number of columns of $\mtx{A}$) is large.
Instead of the \SVD, we can use the eigendecomposition of the smaller
$k \times k$ matrix $\mtx{B} \mtx{B}^{*}$. For this, we use the relations:
\begin{eqnarray*}
&& \mtx{B} = \tilde{\mtx{U}}_k \mtx{\Sigma}_k \mtx{V}^{*}_k = \displaystyle\sum_{i=1}^k \sigma_i \tilde{\vct{u}}_i \vct{v}_i^{*} \quad \mbox{;} \quad \mtx{B}^{*} = \mtx{V}_k \mtx{\Sigma}_k \tilde{\mtx{U}}_k^{*} \quad \mbox{;} \quad \mtx{B} \vct{v}_i = \sigma_i \tilde{\vct{u}}_i \\
&& \mtx{B} \mtx{B}^{*} = \left( \displaystyle\sum_{i=1}^k \sigma_i \tilde{\vct{u}}_i \vct{v}^{*}_i \right) \left( \displaystyle\sum_{j=1}^k \sigma_j \tilde{\vct{u}}_j \vct{v}^{*}_j \right)^{*} = \displaystyle\sum_{i,j=1}^k \sigma_i \sigma_j \tilde{\vct{u}}_i \vct{v}_i^{*} \vct{v}_j \tilde{\vct{u}}_j^{*} = \displaystyle\sum_{i=1}^k \sigma_i^2 \tilde{\vct{u}}_i \tilde{\vct{u}}_i^{*} = \tilde{\mtx{U}}_k \mtx{D}_k \tilde{\mtx{U}}_k^{*},
\end{eqnarray*}
where $\mtx{\Sigma}_k = \sqrt{\mtx{D}_k}$ element wise.
To compute the right eigenvectors $\vct{v}_i$, we can use the following relations, followed by truncation to the $k$ dominant components:
\begin{equation*}
\mtx{B}^{*} \tilde{\mtx{U}}_k = \mtx{V}_k \mtx{\Sigma}_k \tilde{\mtx{U}}_k^{*} \tilde{\mtx{U}}_k = \mtx{V}_k \mtx{\Sigma}_k \implies \mtx{V}_k = \mtx{B}^{*} \tilde{\mtx{U}}_k \mtx{\Sigma}_k^{-1},
\end{equation*}
assuming all the singular values in $\mtx{\Sigma}_k$ are above zero (which is the case for
$k$ smaller than the numerical rank of $\mtx{A}$). The eigendecomposition of the symmetric
matrix $\mtx{B} \mtx{B}^{*}$ can be easily performed with a library call or hand coded Lanczos
routine. Since this matrix is small ($l \times l$), the eigendecomposition
is not expensive. On the other hand, a simple yet possibly time and memory
consuming step when $\mtx{B}$ is large, is to carry out the matrix matrix multiplication
$\mtx{B} \mtx{B}^{*}$. When $\mtx{B}$ is large, this
step can be done in a column by column fashion via multiplication with
standard basis vectors $\mtx{B} (\mtx{B}^{*} e_j)$. A similar approach in this case is
useful for obtaining the columns of $\mtx{V}_k$. A disadvantage of using this method is that
the matrix $\mtx{B} \mtx{B}^{*}$ has essentially the square of the condition number of $\mtx{B}$.
As a result, very small singular values of $\mtx{A}$ near machine precision
may not be properly resolved. This is an issue only if $\mtx{A}$ is expected to have very
small singular values amongst $\sigma_1, \dots, \sigma_k$.

Another approach is to use a QR factorization of $\mtx{B}^{*}$ instead of
forming $\mtx{B} \mtx{B}^{*}$. Out of this we get a $l \times l$
matrix $\mtx{R}$ on which we perform the \SVD, instead of performing the \SVD~on the
$l \times n$ matrix $\mtx{B}$. This version is based on the following calculations.
Let us take the (economic form of the) \QR~factorization
$\mtx{B}^{*} = \hat{\mtx{Q}} \hat{\mtx{R}}$. Then $\hat{\mtx{R}}$ is $l \times l$ and taking the
\SVD~yields $\hat{\mtx{R}} = \hat{\mtx{U}} \hat{\mtx{\Sigma}} \hat{\mtx{V}}^{*}$.
\begin{equation*}
\mtx{A} \approx \mtx{Q} \mtx{Q}^{*} \mtx{A} = \mtx{Q} \mtx{B} = \mtx{Q} \hat{\mtx{R}}^{*} \hat{\mtx{Q}}^{*} = \mtx{Q} \hat{\mtx{V}} \hat{\mtx{\Sigma}} \hat{\mtx{U}}^{*} \hat{\mtx{Q}}^{*}
\end{equation*}
Thus, the low rank \SVD~components are
$\mtx{U}_k = \left(\mtx{Q} \hat{\mtx{V}}\right)(:,1:k), \mtx{\Sigma}_k = \mtx{\Sigma}(1:k,1:k),
\mtx{V}_k = \left(\hat{\mtx{Q}} \hat{\mtx{U}}\right)(:,1:k)$.
For both variations of the algorithms, the power scheme with sampling matrix
$(\mtx{A} \mtx{A}^{*})^q \mtx{A}$ may be used to improve performance for matrices
with significant tail singular values. Notice that in many software packages, the order
of the returned eigenvalues in an eigendecomposition is opposite to that of the singular
values in an \SVD. For this reason, following an eigendecomposition call,
instead of the first $k$ components, the last $k$
components must sometimes be extracted (for example, in Matlab) to correspond to the
extraction of the most significant (in absolute magnitude) components. The pseudocode for the
two methods is provided in the Appendix as algorithms \ref{algo:rank_rsvd1}
and \ref{algo:rank_rsvd2}.

For either method, the upper bound on the approximation error \cite{2011_martinsson_randomsurvey} with the
randomized low rank \SVD~algorithm can be large,
\begin{equation*}
\|\mtx{A} - \mtx{U}_k \mtx{\Sigma}_k \mtx{V}^{*}_k\|_2 \leq \sqrt{kn} \sigma_{k+1},
\end{equation*}
with respect to the optimal $\sigma_{k+1}$ (in the spectral norm) given in
Theorem \ref{thm:eckartyoung}. However, if we incorporate the use of the \textit{power samping scheme},
described in \ref{sec:power_scheme}, then the approximate upper bound  \cite{2011_martinsson_randomsurvey}  improves to:
\begin{equation*}
\|\mtx{A} - \mtx{U}_k \mtx{\Sigma}_k \mtx{V}^{*}_k\|_2 \leq (kn)^\frac{1}{2(2q + 1)} \sigma_{k+1}.
\end{equation*}
For sufficiently large $q$, this is substantially closer to the optimal value of $\sigma_{k+1}$
in Theorem \ref{thm:eckartyoung}. Numerical results with power sampling (using $q \geq 2$) are often very close to optimal, as
illustrated in Section \ref{sec:performance}.

As previously mentioned in \ref{sec:adaptive_rank_approx},
if a \QB~decomposition of $\mtx{A}$ to tolerance
$\epsilon$ is obtained, such that
$\|\mtx{Q}\mtx{B} - \mtx{A}\| < \epsilon$ (using Algorithm $\QB_2$, for instance), then
following one of the above procedures for computing the low rank \SVD, we would have the
approximation satisfy the same error bound:
$\|\mtx{U}_k \mtx{\Sigma}_k \mtx{V}^*_k - \mtx{A}\| < \epsilon$. This is a very useful
property of the \QB~decomposition and of the corresponding block randomized routines
in RSVDPACK.

\subsection{Randomized algorithms for the \ID}
\label{sec:randidalgs}

We now discuss the use of randomized sampling to speed up the \ID~computation.
Observe that in order to compute the column \ID~of a matrix, all we need is to
know the linear dependencies among the columns of $\mtx{A}$. When the singular values
of $\mtx{A}$ decay reasonably rapidly, we can determine these linear dependencies by
processing a matrix $\mtx{Y}$ of size $\ell \times n$, where $\ell$ can be much smaller
than $n$, similar to what we did in the randomized algorithm for computing
the low rank \SVD. In the randomized scheme for the \ID, we perform the
partial pivoted \QR~factorization of the smaller $\mtx{Y}$ rather than of $\mtx{A}$.
Mathematical justification for this is given by the relation between the column
norms of $\mtx{Y} = \mtx{\Omega} \mtx{A}$ and of $\mtx{A}$ in
\eqref{eq:col_norm_relations}. That is, the column norm variations of
$\mtx{A}$ are preserved by $\mtx{Y}$. (Notice that as discussed in 
section \ref{sec:randomized_algorithms}, by Lemma \ref{lem:Omega_a}, for
any $i,j$ between $1$ and $n$, if we take a difference of two columns of $\mtx{A}$
as $\vct{x} = \mtx{A}(:,i) - \mtx{A}(:,j)$
and $\vct{y} = \tilde{\mtx{\Omega}}\vct{x} = \mtx{Y}(:,i) - \mtx{Y}(:,j)$,
it follows that $\mathrm{E}\left[\|\vct{y}\|^2\right] = l \|\vct{x}\|^2$).
Suppose that we are given an $m\times n$ matrix $\mtx{A}$ and seek to compute
a column \ID, a two-sided \ID, or a \CUR~decomposition. We can perform this task as
long as we can identify an index vector $J_c$ as in \eqref{eq:Jc_index_vec}
and a basis matrix $\mtx{V} \in \mathbb{C}^{n\times k}$ such that
$$
\begin{array}{cccccccccc}
\mtx{A} &=& \mtx{A}(:,J_{\rm skel}) & \mtx{V}^{*} &+& \mtx{E}\\
m\times n && m\times k & k\times n && m\times n
\end{array}
$$
where $\mtx{E}$ is small. In Algorithm \ref{algo:rank_k_id} we found $J_c$ and $\mtx{V}$
by performing a column pivoted \QR~factorization of $\mtx{A}$. In order to do this via
randomized sampling, we proceed as above. First, we fix a small over-sampling parameter $p$,
then draw a $(k+p)\times m$ GIID matrix $\tilde{\mtx{\Omega}}$, and form the \textit{sampling matrix}
\begin{equation}
\label{eq:Y=OmegaA}
\begin{array}{cccccccc}
\mtx{Y} &=& \tilde{\mtx{\Omega}} & \mtx{A}.\\
(k+p)\times n && (k+p)\times m & m\times n
\end{array}
\end{equation}
We assume based on \eqref{eq:col_norm_relations}, that the space spanned by the rows of $\mtx{Y}$
contains the dominant $k$ right singular vectors of $\mtx{A}$ to high accuracy. This
is precisely the property we need in order to find both the vector $J$ and the
basis matrix $\mtx{V}$. All we need to do is to perform $k$ steps of a column pivoted
QR factorization of the sample matrix to form a partial QR factorization
$$
\begin{array}{cccccccc}
\mtx{Y}(:,{J}_c) &\approx& \mtx{Q} & \mtx{S}.\\
(k+p)\times n && (k+p)\times k & k\times n
\end{array}
$$
Then compute the matrix of expansion coefficients via $\mtx{T} = \mtx{S}(1:k,1:k)^{-1}\mtx{S}(1:k,(k+1):n)$,
or a stabilized version, as described in Section \ref{sec:QRandonesideID}.
The matrix $\mtx{V}$ is formed from $\mtx{T}$ as before,
resulting in Algorithm \ref{algo:rank_k_rand_id}.
The asymptotic cost of Algorithm \ref{algo:rank_k_rand_id} is $O(mnk)$,
like that of the randomized low rank \SVD.
However, substantial practical gain is achieved due to the fact that the
matrix-matrix multiplication is much faster than a column-pivoted \QR~factorization.
This effect gets particularly pronounced when a matrix is very large and is
stored either out-of-core, or on a distributed memory machine.

Finally, we notice that the power sampling scheme used for the low rank \SVD, is
also equally effective for the randomized \ID~scheme. In this case, since we multiply
by $\tilde{\mtx{\Omega}}$ on the left, we use the sampling matrix:
\begin{equation}
\label{eq:Ypower}
\mtx{Y} = \tilde{\mtx{\Omega}}\,\mtx{A}\,\bigl(\mtx{A}^{*}\mtx{A})^{q}.
\end{equation}
%
In cases where very high computational precision is required (higher than
$\epsilon_{\rm mach}^{1/(2q+1)}$, where $\epsilon_{\rm mach}$ is the machine
precision \cite{2014arXiv1412.8447V}), one typically needs to orthonormalize the sampling matrix in
between multiplications, resulting in:
\begin{center}
\begin{minipage}{100mm}
\begin{tabbing}
\= \hspace{10mm} \= \hspace{5mm} \= \hspace{5mm} \= \hspace{5mm} \\ \kill
\> (1) \> $\mtx{Y} = (\mtx{A}^{*}\mtx{\Omega}^{*})^{*} $\\
\> (2) \> \textbf{for} $i = 1:q$\\
\> (3) \> \> $\mtx{Y} \leftarrow  (\mtx{A}\texttt{orth}(\mtx{Y}^{*}))^{*}$\\
\> (4) \> \> $\mtx{Y} \leftarrow (\mtx{A}^{*}\texttt{orth}(\mtx{Y}^{*}))^{*} $\\
\> (5) \> \textbf{end}
\end{tabbing}
\end{minipage}
\end{center}
where as before, $\texttt{orth}$ refers to orthonormalization of the \textit{columns},
without pivoting.
The randomized algorithm for the
\ID~appears as Algorithm \ref{algo:rank_k_rand_id} in the appendix. The error for the
randomized \ID~approximation is lower bounded by the error in the truncated pivoted
\QR~factorization and typically stays reasonably close to this value when the power sampling
scheme with $q \geq 1$ is employed (see \cite{2014arXiv1412.8447V} and section
\ref{sec:performance}).

Analogously to the \SVD, we can compute the approximate rank $k$ \ID~given
matrix $\mtx{B} = \mtx{Q}^{*} \mtx{A}$, which allows us to use
the block algorithms discussed in \ref{sec:adaptive_rank_approx},
for the construction of
$\mtx{Q}$ and $\mtx{B}$ in computing the \ID. We can compute the
\ID~of $\mtx{B} = \mtx{Q}^* \mtx{A}$ to obtain:
\begin{equation*}
\mtx{B}(:,{J}_c) \approx \mtx{B}(:,{J}_c(1:k)) \mtx{S}^{*}
\end{equation*}
and then since, $\mtx{Q} \mtx{B} \approx \mtx{A}$, it follows from multiplication of
both sides by $\mtx{Q}$ that $\mtx{A}(:,{J}_c) \approx \mtx{A}(:,{J}_c(1:k)) \mtx{S}^{*}$.
Notice, however, that in contrast to the case of the low rank \SVD, if a \QB~decomposition to
tolerance $\epsilon$ is obtained, it does not imply that the resulting \ID~obtained
from $\mtx{B}$ will approximate $\mtx{A}$ to the same tolerance. The error is in practice larger.
From \cite{2011_martinsson_randomsurvey}, $(3.6)$, we have the bound:
\begin{equation*}
\|\mtx{A}(:,J_{\rm c}(1:k)) \mtx{V}^* - \mtx{A}\| \leq \left[1 + \sqrt{1 + 4k(n-k)}\right]\epsilon
\end{equation*}
for the \ID~obtained from $\mtx{B} = \mtx{Q}^* \mtx{A}$.

\subsection{Randomized algorithms for the \CUR~decomposition}

Once an approximate \ID~is obtained with a randomized scheme, the randomized algorithm for
the \CUR~proceeds as described in Section \ref{subsec:twosidedidandcur},
using the results of the two sided \ID~factorization. Notice that to form an approximate two sided \ID~only one application of the randomized
\ID~method is necessary. The subsequent \ID~is of a small matrix (see \eqref{eq:factorC}),
and does not need to employ randomization to retain efficiency.
The error is again lower bounded by the truncated pivoted \QR~factorization of the same rank. We illustrate some examples in Section \ref{sec:performance}.

\section{Developed Software}
\label{sec:software}
In this section, we describe the developed software which has been written to
implement the randomized algorithms for the computation of the low rank \SVD, \ID,
and \CUR~routines. We have developed codes for multi-core and GPU architectures.
In each case, we have used well known software libraries to
implement BLAS and certain LAPACK routines
and write wrappers for various BLAS and LAPACK operations (e.g. vector manipulation,
matrix multiplication, \QR, eigendecomposition, and \SVD~operations).
The codes are written using the C programming language and are built on top of the
Intel MKL, NVIDIA cuBLAS, and CULA libraries.
Since we created wrappers for most of the required matrix and vector functions, it is
not difficult to port the code to use other libraries for BLAS and LAPACK.
The codes use OpenMP, where possible, to speed up matrix-vector
operations on multi-core systems. Each code can load a matrix from disk stored using the
following simple binary format for dense matrices:
\lstset{language=C,
   keywords={break,case,catch,continue,else,elseif,end,for,function,
      global,if,otherwise,persistent,return,switch,try,while},
   basicstyle=\ttfamily,
   keywordstyle=\color{blue},
   commentstyle=\color{red},
   stringstyle=\color{dkgreen},
   numbers=left,
   numberstyle=\tiny\color{gray},
   stepnumber=1,
   numbersep=10pt,
   backgroundcolor=\color{white},
   tabsize=4,
   showspaces=false,
   showstringspaces=false}
\begin{lstlisting}
num_rows (int)
num_columns (int)
nnz (double)
...
nnz (double)
\end{lstlisting}
where the nonzeros are listed in the order of a double loop over the rows and columns
of the matrix. Note that even zero values are written in this format.
It is not difficult to extend the codes to support arbitrary matrix formats,
including those used for sparse matrices. We expect to add this functionality
in future releases. The matrix can be loaded using the supplied
function:
\begin{lstlisting}
matrix_load_from_binary_file(char *fname)
\end{lstlisting}

Below, we list the main available functions for \SVD, \ID, and \CUR~computations. These 
routines can be used from simple C driver programs. Example driver programs are provided for 
illustration with the source code. 
We also provide a mex file interface for some of these routines to use 
inside Matlab. 
\inputsourcecode{sourcecode.c}
We now describe the functions and their parameters. The functions for \SVD, \ID,
and \CUR~have a similar calling sequence. We provide a routine which computes the full
truncated decomposition, as well as routines for randomized and block randomized approximate
versions, either to a fixed rank or to a specified tolerance level. We now describe in detail
the routines for the low rank \SVD. The function:
\begin{lstlisting}
low_rank_svd_decomp_fixed_rank_or_prec(mat *A, int k, double TOL,
  int *frank, mat **U, mat **S, mat **V);
\end{lstlisting}
computes, using the full \SVD~of $\mtx{A}$, either the rank $k$ low rank \SVD~of $\mtx{A}$ by
truncating to the first $k$ components or computes the low rank \SVD~to given precision TOL
such that the singular value $\sigma_{k+1} < TOL$. The routine returns a fixed rank result
if the parameter $k > 0$ is supplied or computes the desired rank if $k\leq0$ and $TOL$ is
supplied instead.
The output rank is then written in $\textrm{frank}$. The output matrices $U,S,V$ contain the
components of the computed low rank \SVD. The routine:
\begin{lstlisting}
low_rank_svd_rand_decomp_fixed_rank(mat *A, int k, int p, int vnum,
  int q, int s, mat **U, mat **S, mat **V);
\end{lstlisting}
computes the low rank \SVD~of rank $k$ using randomized sampling. Here the parameter
$\textrm{vnum}$ (short for version number), indicates which routine to use for computation, either
the method using the $\mtx{B} \mtx{B}^*$ matrix, or the
\QR~decomposition method, as discussed in
Section \ref{sec:randomized_algorithms_for_low_rank_svd} ($\textrm{vnum}$ equal to 
$1$ for the \QR~method or to $2$ for  $\mtx{B} \mtx{B}^*$ method).
The parameter $p$ corresponds to oversampling (usually a small number $5 \leq p \leq 20$ relative to $k$).
Parameters $q$ and $s$ correspond to the power sampling scheme with $q$ being the power
and $s$ controlling the amount of orthogonalizations ($s=1$ corresponds to the most
orthogonalizations after each multiplication with $\mtx{A}$). Typically, one
uses $1 \leq q \leq 5$. Next, the routine:
\begin{lstlisting}
low_rank_svd_blockrand_decomp_fixed_rank_or_prec(mat *A, int k, int p,
   double TOL, int vnum, int kstep, int q, int s,
   int *frank, mat **U, mat **S, mat **V);
\end{lstlisting}
uses the block randomized algorithm to construct the approximate \QB~decomposition of
$\mtx{A}$, and from that construct the low rank \SVD, as explained in Section
\ref{sec:randomized_algorithms_for_low_rank_svd}.
The decomposition is computed either to specific rank $\approx (k + p)$ using a computed
number of blocks ($\approx \frac{(k+p)}{\textrm{kstep}}$) each of
size \textrm{kstep}, or is computed
such that $\|\mtx{A} - \mtx{Q}\mtx{B}\| < \textrm{TOL}$ if $k=0$ is supplied.
Parameter \textrm{vnum} again corresponds to the routine to use on $B$ as above and
$q,s$ correspond to the power sampling scheme. If rank $k>0$ is supplied, the resulting
decomposition components $\mtx{U},\mtx{S},\mtx{V}$ are truncated to rank $k$. Otherwise,
in \textrm{TOL} model, $k$ and the output parameter \textrm{frank} is set to
the output size of $\mtx{B}$ (which depends on the supplied tolerance \textrm{TOL}).

For the \ID~and \CUR~routines, TOL mode is based on \eqref{eqn:qrtrunc}. When $k > 0$ is not
satisfied, the tolerance based algorithms determine the \QR~decomposition rank such
that $\|\mtx{S}_{22}\| < TOL$.
In the block rand scheme, the fraction $\frac{k}{k+p}$ is used to determine the truncation
factor of the pivoted \QR~decomposition of \mtx{B}, whose size depends on the supplied
tolerance \textrm{TOL}. The output size of the \QR~components is written to \textrm{frank}.

\section{Performance Comparisons}
\label{sec:performance}

We now present some performance comparisons. For our tests, we form
$m \times n$ matrices of the form $\mtx{A} = \mtx{U} \mtx{D} \mtx{V}^*$,
where $\mtx{U}$ and $\mtx{V}$ are randomly drawn matrices with orthonormal columns and
where $\mtx{D}$ is a diagonal matrix with the desired singular value distribution.
We use different \textit{logspaced} distributions of singular values, which in
Matlab notation are generated using the commands
$\textrm{logspace}(0,-0.5,r)$, $\textrm{logspace}(0,-2,r)$,
and $\textrm{logspace}(0,-3.5,r)$ with $r = \min(m,n)$. We refer to the resulting matrices
as types I, II, and III.  First, we present some comparisons regarding the
accuracy of the low rank \SVD~and \ID~computations obtained with different powers of the
power scheme for the three types of matrices. In Figure \ref{fig:rsvd_diffq_errors},
we plot the approximation errors
$\frac{\|\mtx{A} - \mtx{U}_k \mtx{\Sigma}_k \mtx{V}^*_k\|}{\|\mtx{A}\|}$ vs $k$
obtained using the RSVD algorithm (with \QR~method) with different powers $q$
for the power sampling scheme. We observe that the use of the power scheme iterations ($q>1$),
gives substantially closer to optimal results than $q=0$. On the left, we present an
example where the singular
values fall off relatively slowly, and as a result, each increase in $q$ has a noticeable
effect. In particular, $q=2$ is sufficient for a good approximation and gives
better performance than $q=1$. For matrices
with rapidly decreasing singular values (type III) on the right of
Figure \ref{fig:rsvd_diffq_errors}, we observe that $q=1$ is sufficient for a
good approximation.

\begin{figure*}[ht!]
\centerline{
\includegraphics[scale=0.35]{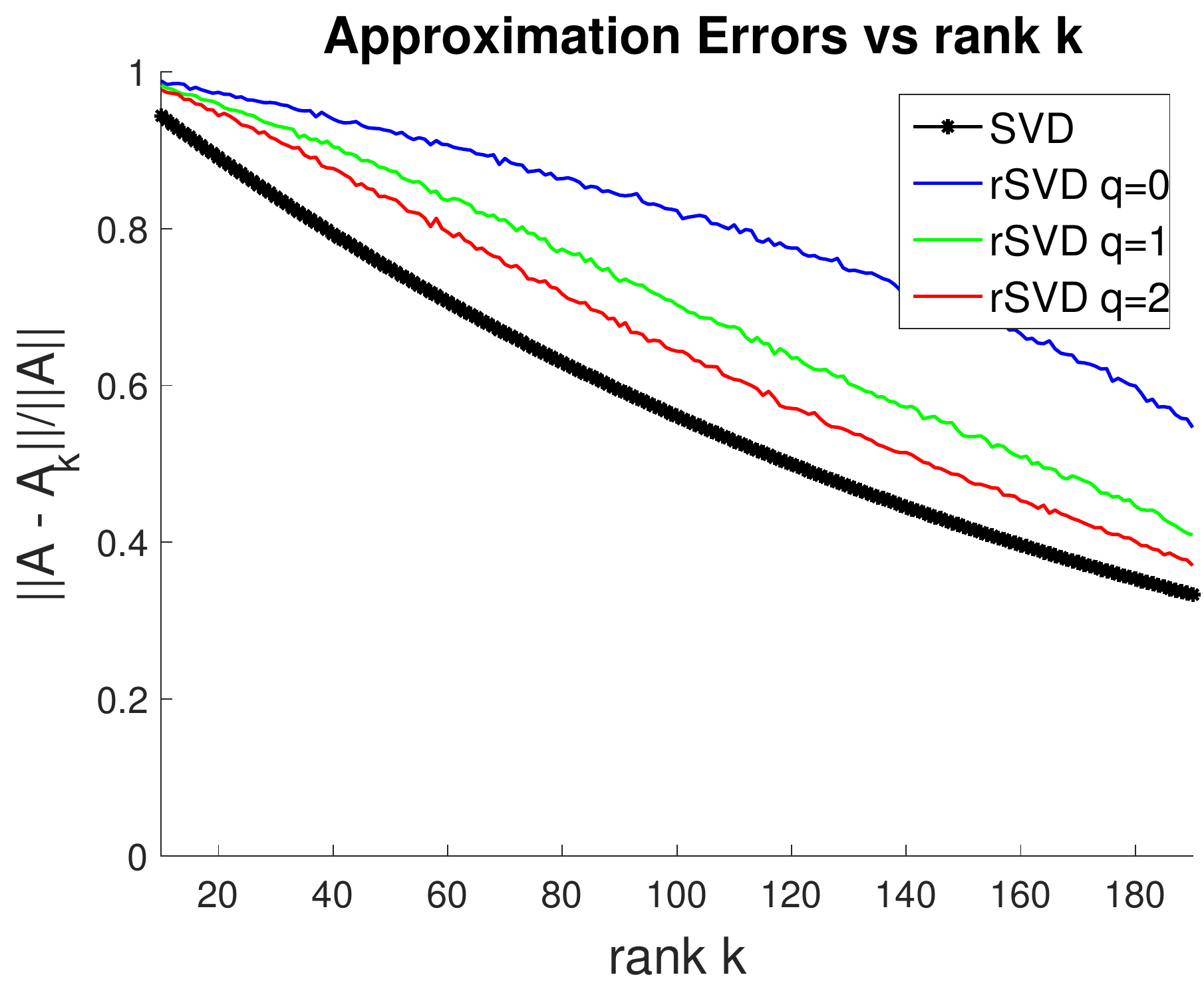}
\includegraphics[scale=0.35]{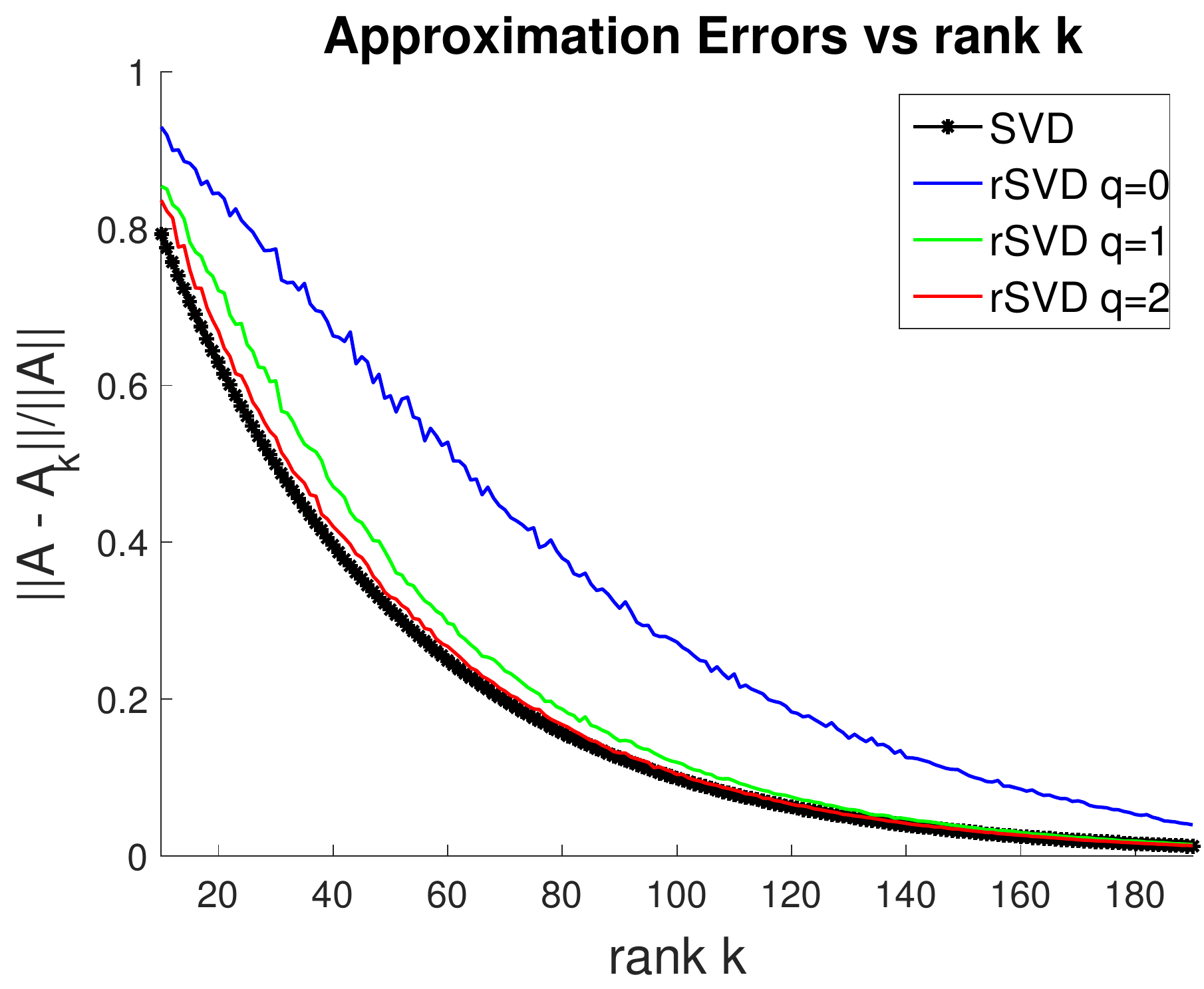}
\includegraphics[scale=0.35]{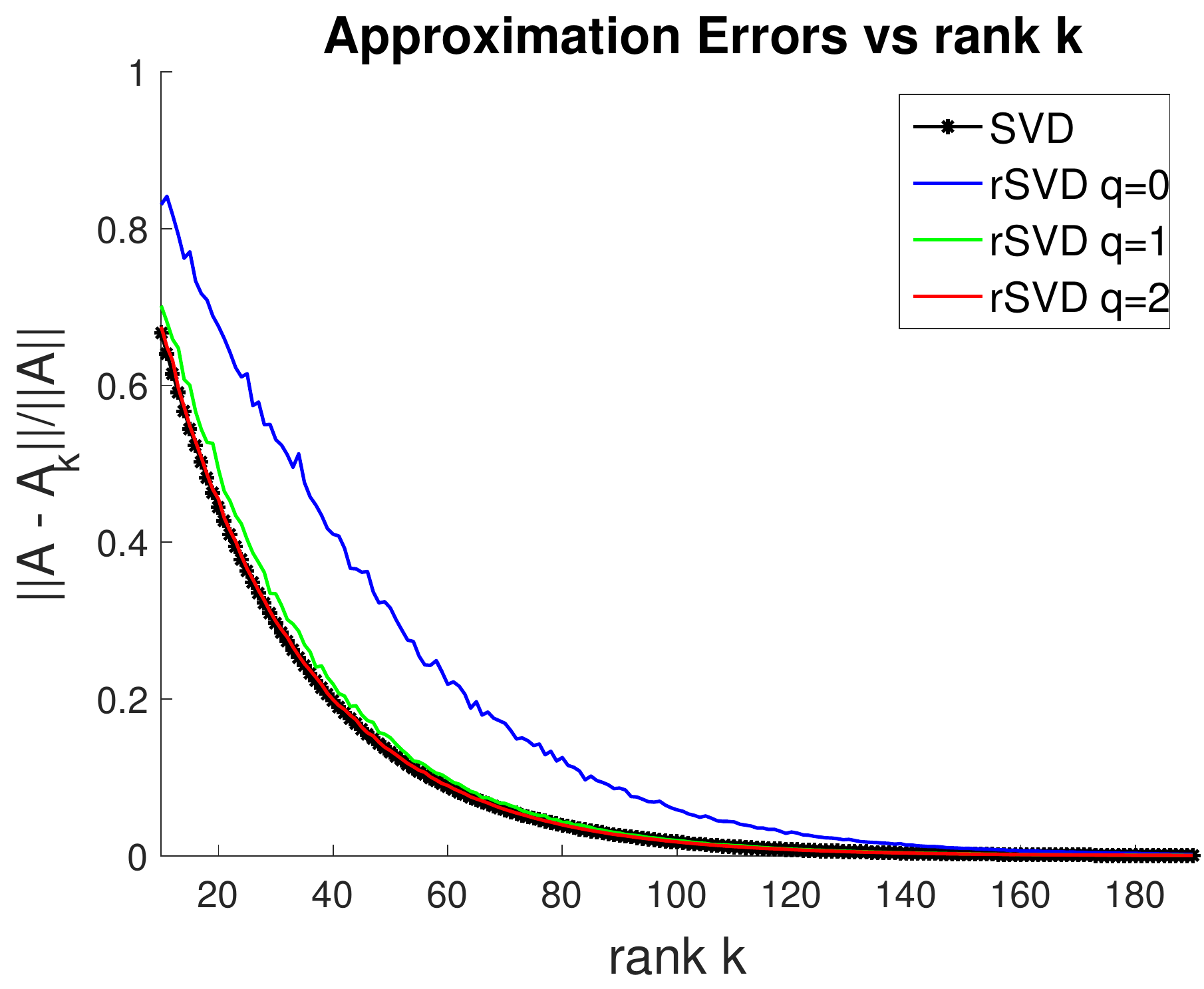}
}
\caption{Approximation errors vs rank $k$ obtained via the RSVD algorithm with
$q=0,1,2$ for better and worse conditioned matrices. \label{fig:rsvd_diffq_errors}}
\end{figure*}

Next, we compare the approximation errors we obtain for the different low rank approximation
algorithms in Figure \ref{fig:rsvd_id_and_cur_errors} for matrices of type II and III.
Recall that based on our
discussion in Section \ref{sec:matrix_decompositions}, for a fixed rank $k$, the
\QR~and single and two sided \ID~decompositions share the same error term (larger
than that of the optimal truncated \SVD), while the \CUR~adds a bit of additional error.
When we use randomized algorithms to compute the approximate decompositions, the \CUR~often
yields slightly better results than the approximate \ID.
We also observe in Figure \ref{fig:rsvd_id_and_cur_errors} that for matrices with
relatively slow singular value decay, the randomized \ID~and \CUR~algorithms give
poor approximations relative to the low rank \SVD. On the other hand, for more
rapid singular value decay, the \ID~and \CUR~approximations give better results.
\begin{figure*}[ht!]
\centerline{
\includegraphics[scale=0.35]{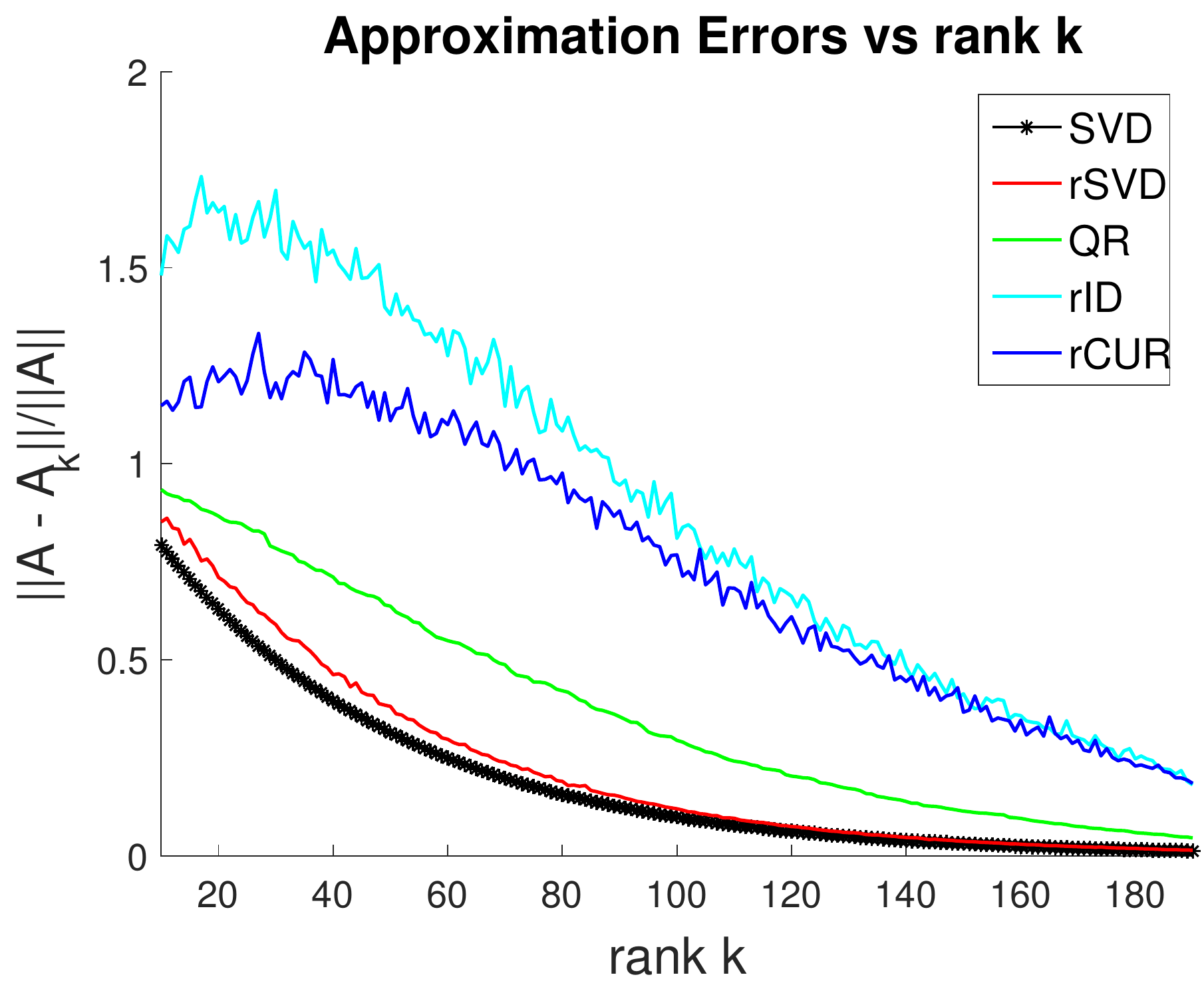}
\includegraphics[scale=0.35]{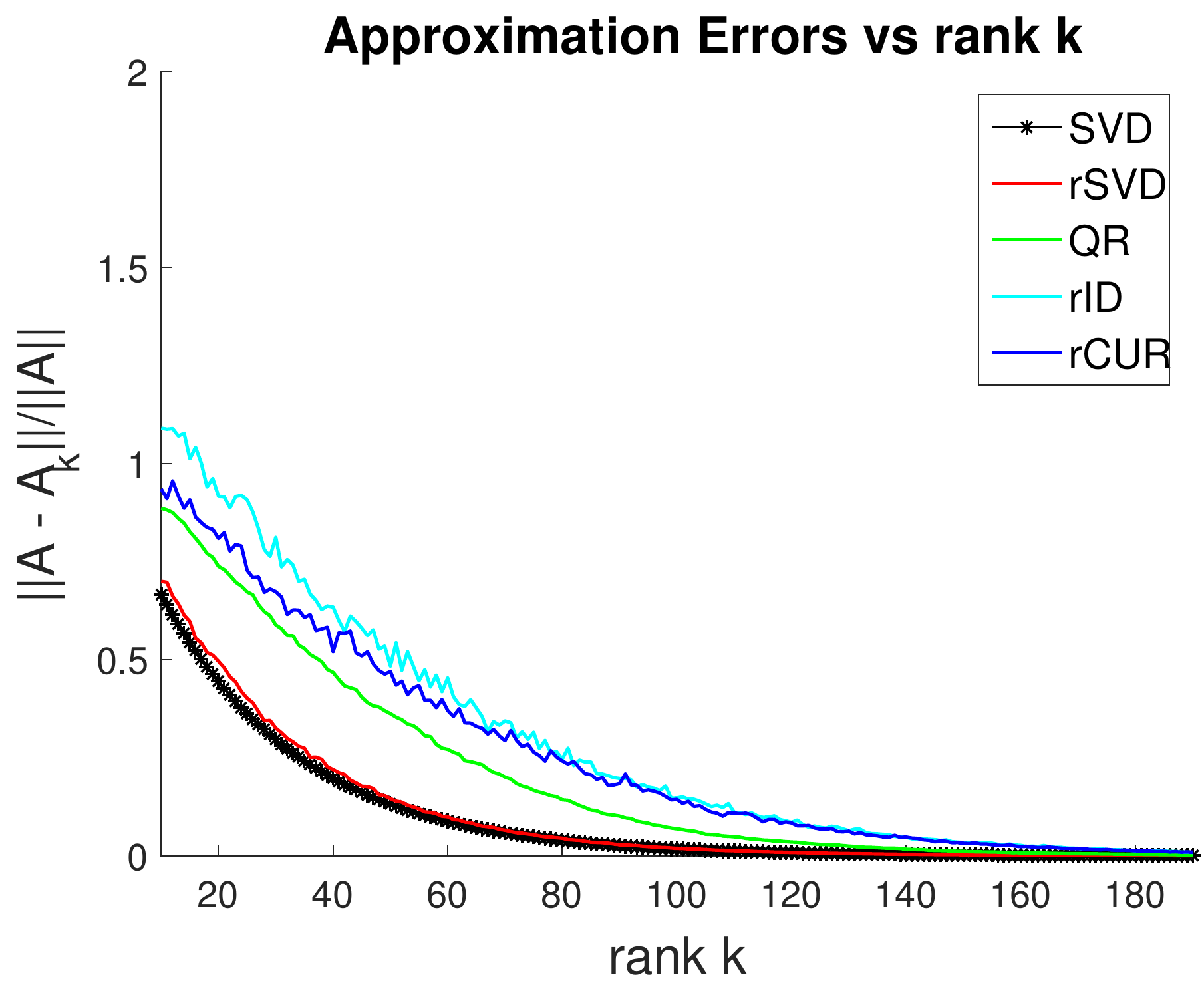}
}
\caption{Approximation errors vs rank $k$ obtained via the RSVD, RID, and RCUR algorithms with
$q=2$ for better and worse conditioned matrices. \label{fig:rsvd_id_and_cur_errors}}
\end{figure*}

However, the low rank \ID~and \CUR~approximations may 
require less storage space than the low rank \SVD. In Figure
\ref{fig:total_nnzs_diff_factorizations}, we plot the number of nonzeros
versus rank $k$ in the matrices of the low rank factorizations for a full and
sparse input matrix. When the matrix $\mtx{A}$ is not sparse,
the low rank \SVD~and the \CUR~factorizations
require the same storage space (in terms of the number of nonzeros) for a given rank.
The \ID~requires less storage because part of matrix $\mtx{V}$ in the \ID~is the $k \times k$
identity matrix. The situation changes dramatically when $\mtx{A}$ is a sparse matrix. On
the right of Figure \ref{fig:total_nnzs_diff_factorizations}, we illustrate the situation
for a $5\%$ nonzero sparse matrix. Both the \CUR~and \ID~use less nonzeros than the
low rank \SVD, for a fixed rank $k$. In fact, for the \ID, the storage size decreases as
the identity matrix occupies a greater portion of $\mtx{V}$. Hence, especially in the case 
of sparse matrices, for the same amount of memory, we can use an \ID~or~\CUR~approximation of higher rank $k$ than for an \SVD. 

\begin{figure*}[ht!]
\centerline{
\includegraphics[scale=0.35]{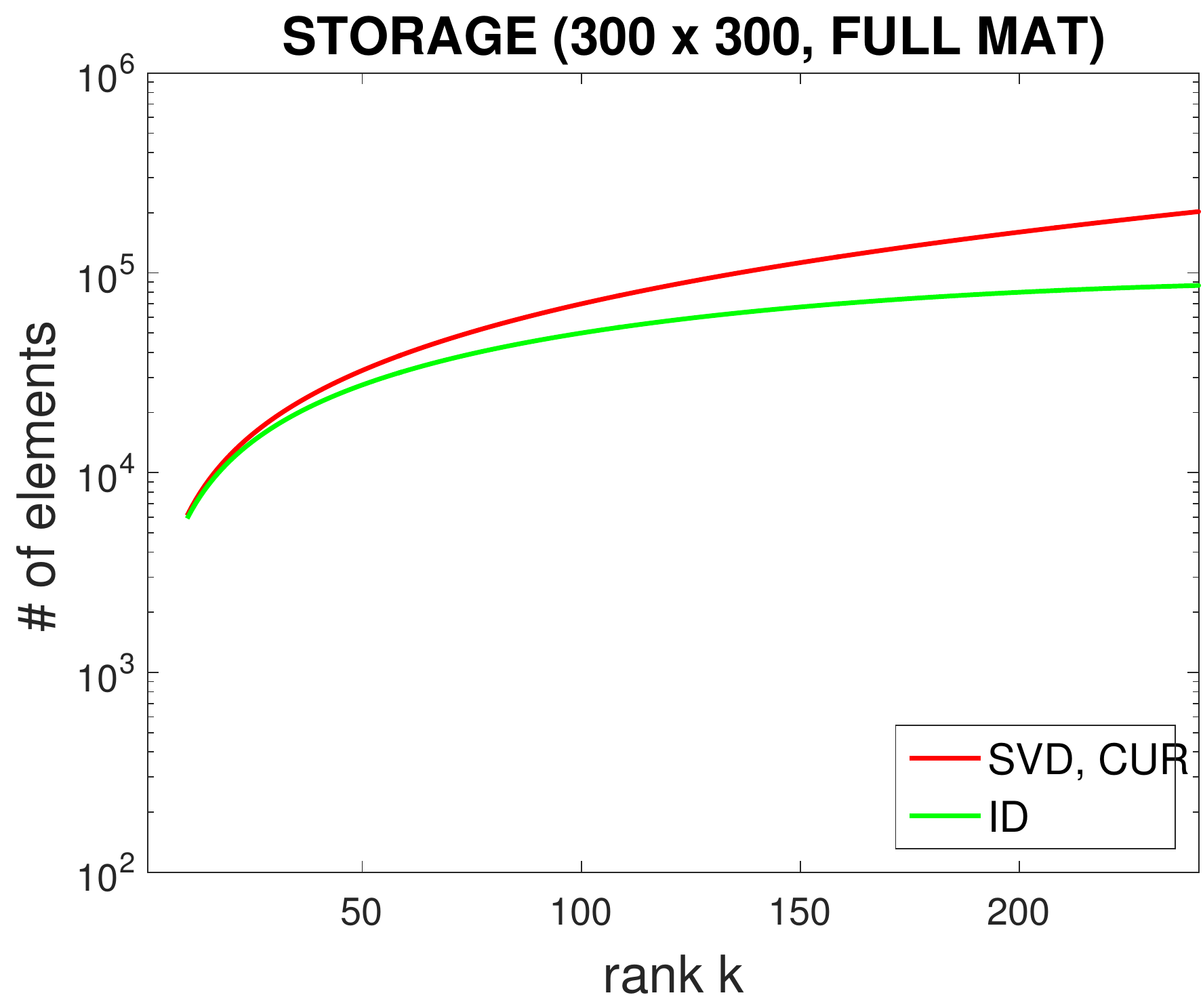}
\includegraphics[scale=0.35]{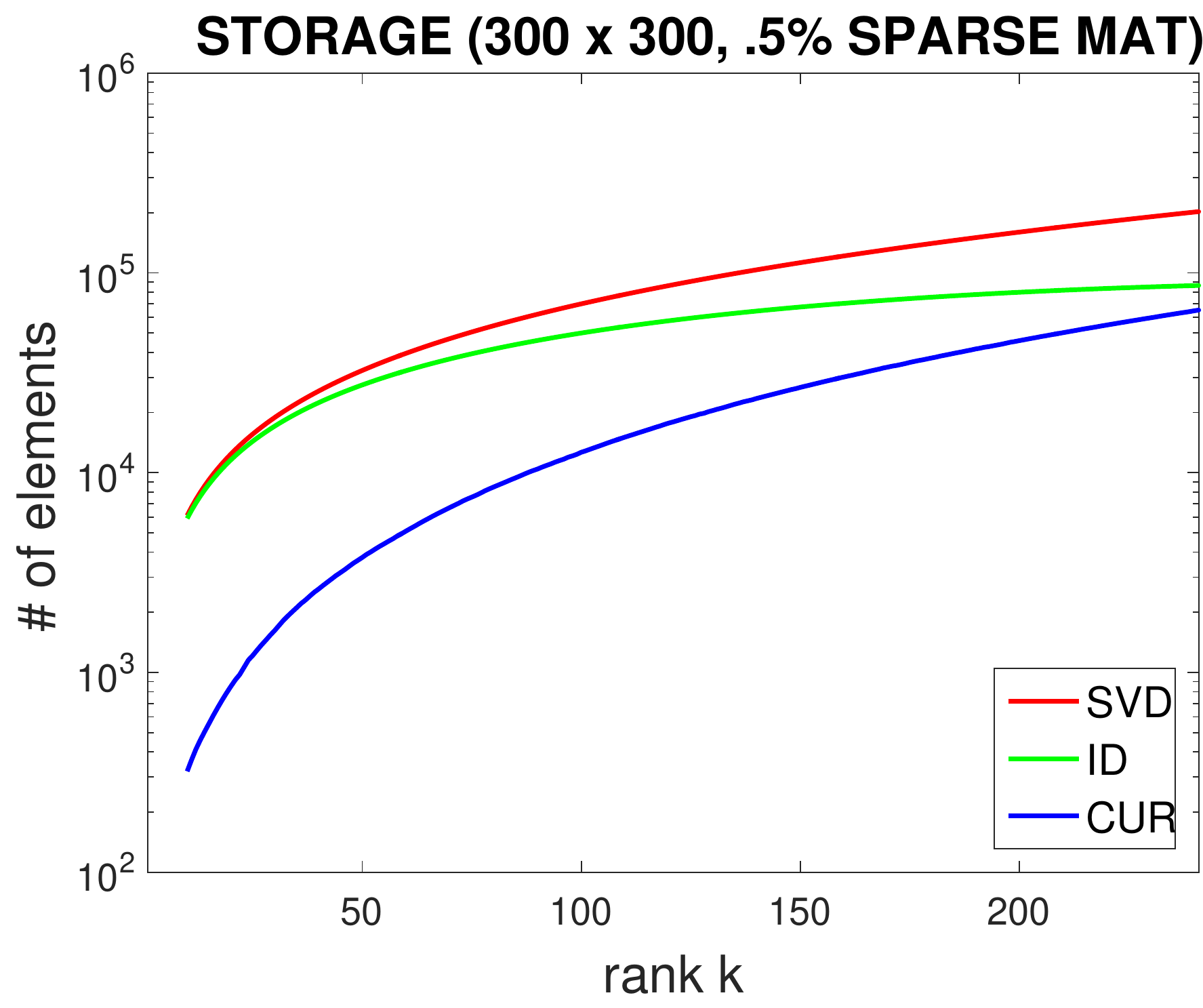}
}
\caption{Total nonzeros in different factorizations for full and sparse matrices, as a 
function of the rank.
\label{fig:total_nnzs_diff_factorizations}}
\end{figure*}

We also illustrate the runtimes of the different functions.
We perform our runs on a PC containing an
Intel Xeon E5-2440 chip (6 cores, up to $2.90$ GHz) and an NVIDIA Tesla K40c graphics card
and we use a $6000\times12000$ dense matrix (we obtain similar results for 
$12000\times6000$ matrices). In Figure \ref{fig:routine_runtimes1}, we
include the runtimes for the low rank factorizations obtained with the randomized and block
randomized algorithms and the corresponding full factorizations. Especially with a GPU, 
great time savings are obtained, since matrix-matrix multiplication on GPU is
cheap because it is very well parallelizable. The only current disadvantage of GPUs is a lack
of high memory, which puts a limit on the matrix size possible to transfer onto the card.
Note that on both architectures, the block randomized \QB~based methods are slower than
their randomized counterparts. The advantage of \QB~however, is the ability to specify
a tolerance error bound for the factorization to satisfy. One also avoids this way the
factorization and multiplication of large matrices, which can be an advantage for
a large $\mtx{A}$. 

In Figure \ref{fig:routine_runtimes1}, we also plot the runtimes of
the full \SVD, full \QR, and an estimated lower bound for a partial pivoted 
\QR~factorization (this routine is implemented in RSVDPACK, but is currently 
slower than optimal). Notice that both the full \SVD~and
\QR~are too expensive to compute if only a low rank factorization is desired. On the other
hand the partial \QR~(obtained by doing only $k$ iterations of Gram-Schmidt) is somewhat
competitive on the CPU, although the resulting approximation error bound is of course higher
than that of the low rank \SVD. On the GPU, however, all randomized schemes show
significantly lower runtimes. This is clearly explained by the difference in
matrix matrix multiplication times plotted in row two of Figure
\ref{fig:routine_runtimes1}. Since our algorithms are heavily based on matrix
matrix multiplications, they perform well on the GPU. Also in Figure \ref{fig:routine_runtimes1},
we compare the runtimes of the Matlab mex interface routines we include for 
$n\times n$ matrices (of type II) using ranks $k = 100,300,500$ to the PROPACK 
\cite{larsen2004propack} package partial \SVD~library function \textrm{lansvd}, 
also partially accelerated by means of mex file 
subroutines. We observe a significant speedup with our mex interfaced routine.

\newpage
\begin{figure*}[ht!]
\centerline{
\includegraphics[scale=0.38]{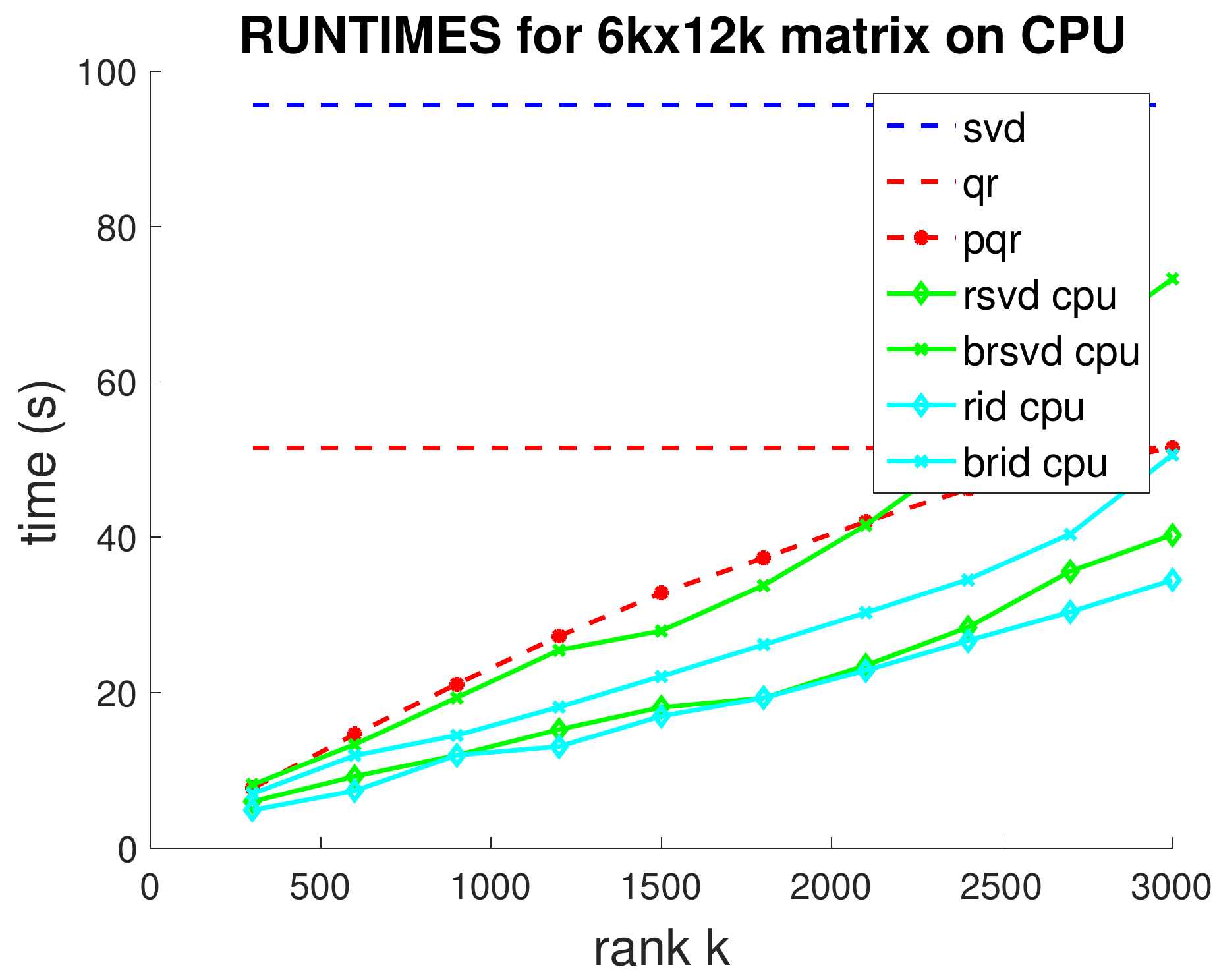}
\includegraphics[scale=0.38]{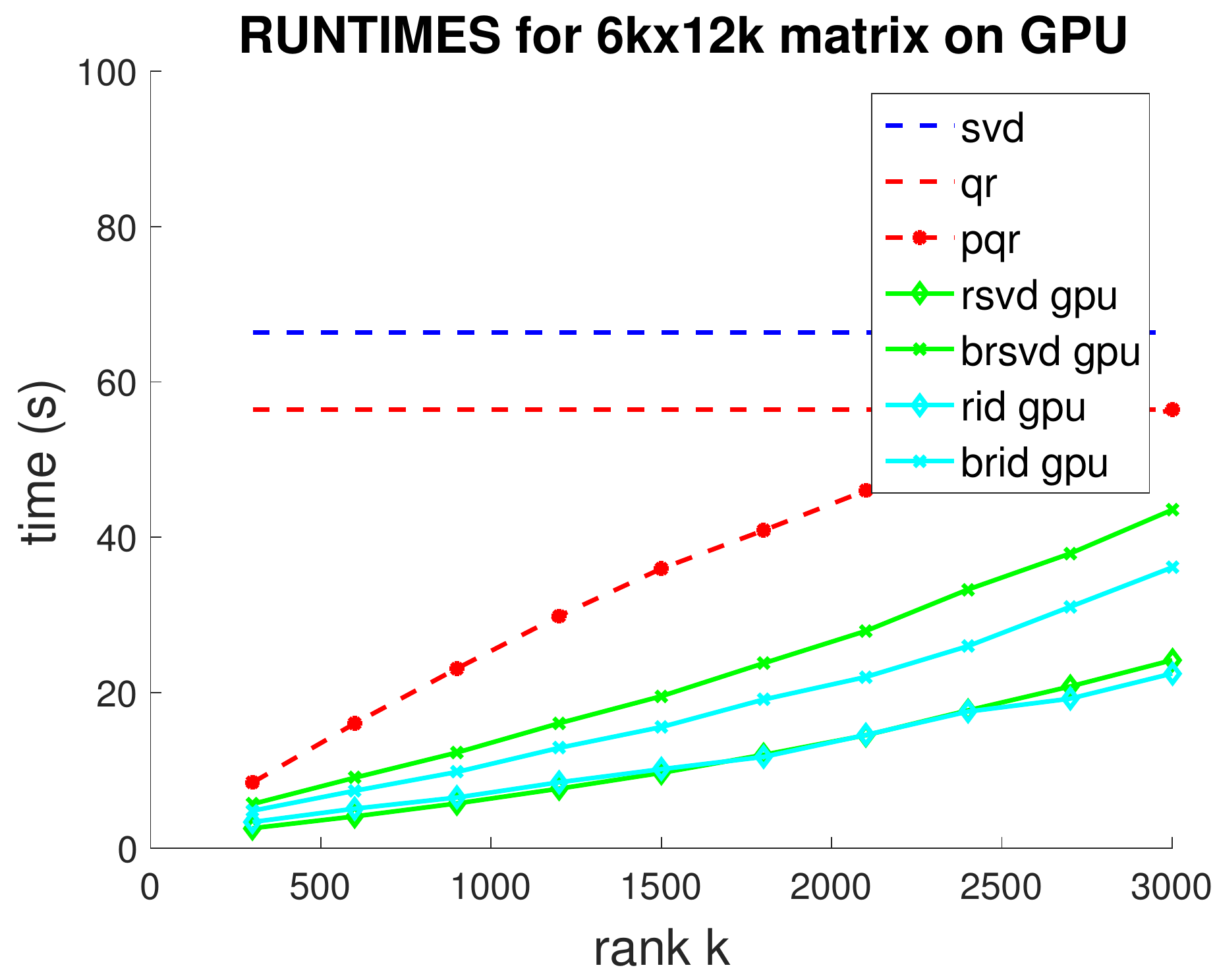}
}
\centerline{
\includegraphics[scale=0.38]{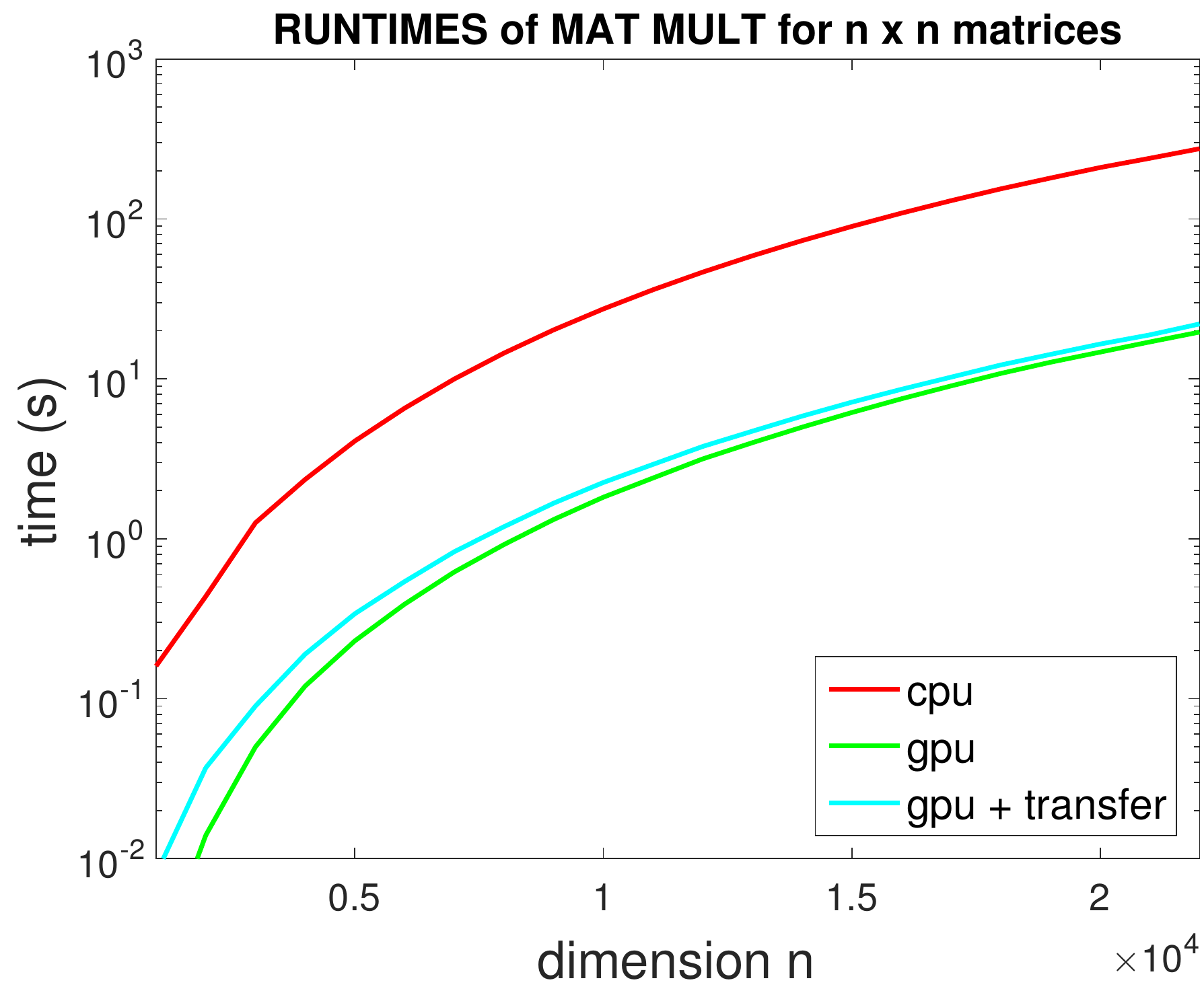}
\includegraphics[scale=0.38]{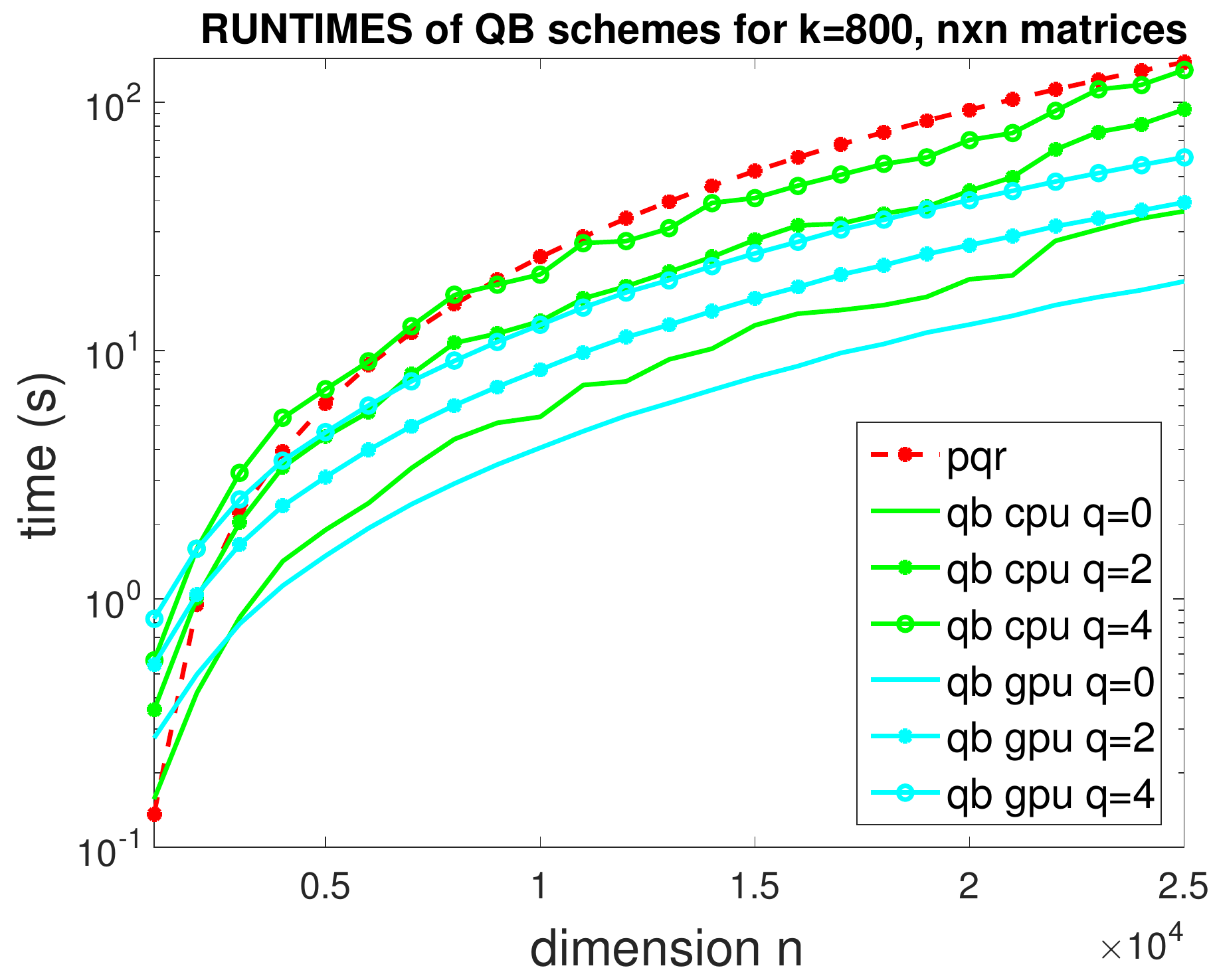}
}
\centerline{
\includegraphics[scale=0.38]{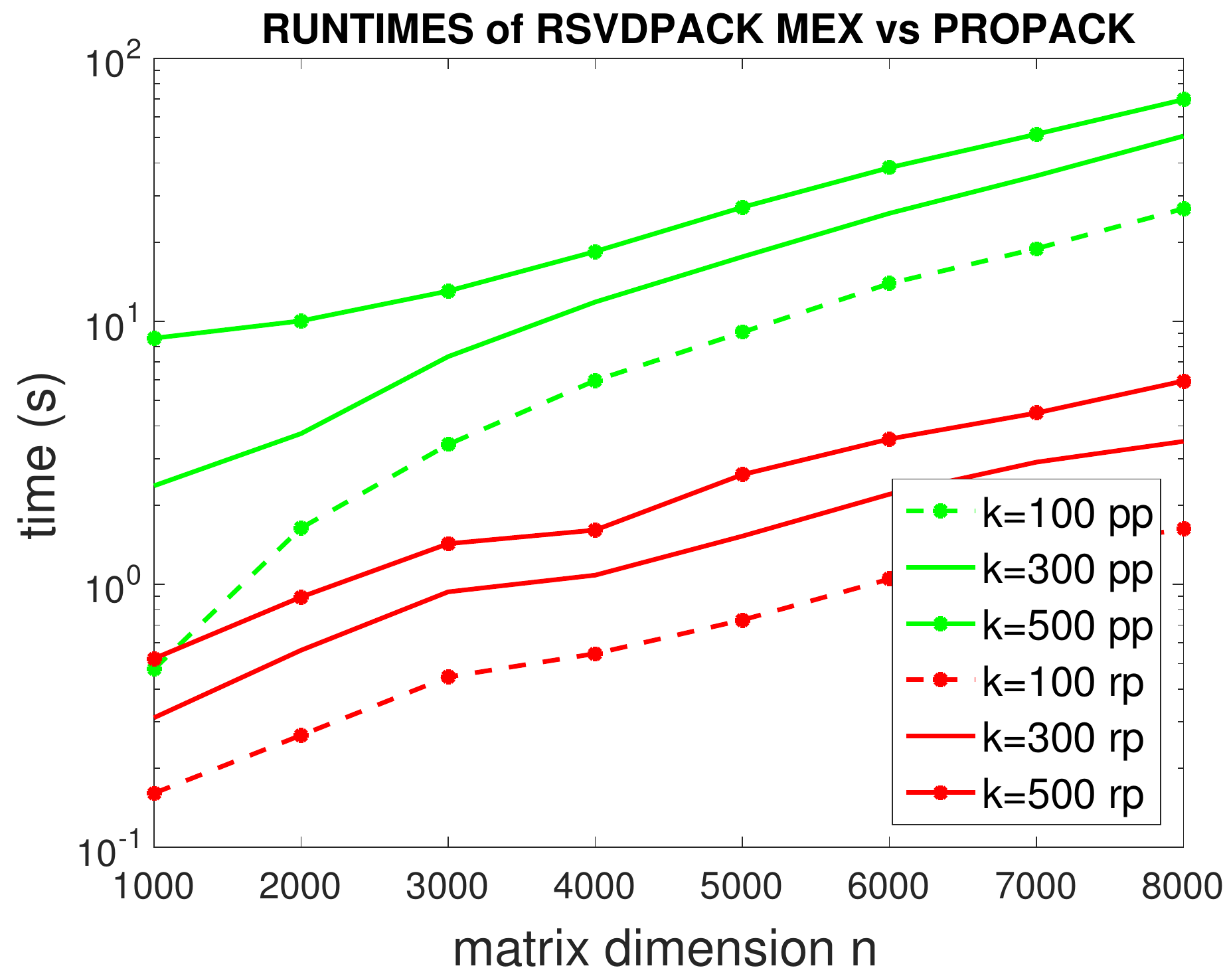}
}
\caption{First row: runtimes of \SVD, \QR, partial \QR, and
randomized and block randomized \SVD~and
\ID~factorizations for a $6000\times12000$ matrix on CPU and GPU. Second row:
runtimes of matrix matrix multiplication on CPU and GPU (and GPU with memory transfer from RAM)
for $n \times n$ matrices. Third row: runtimes of rsvdpack (rp) mex interface (with MKL) for low rank \SVD~of $n \times n$ matrices with different $k$ vs propack (pp) lansvd mex accelerated algorithm. \label{fig:routine_runtimes1}}
\end{figure*}

\newpage

\section{Conclusions}
\label{sec:conc}

This article presents the mathematical details of RSVDPACK: an open source software package
for efficiently computing low rank \SVD, \ID, and \CUR~factorizations of matrices. The package
currently provides the end user with functions to perform each factorization
using a non-randomized, randomized, or block randomized algorithm using an input
rank $k$ or a tolerance value. The package is divided into several C codes supporting 
both multi-core and GPU architectures and provides a Matlab mex file interface.
The provided routines should be suitable for a wide range of applications
and the package can be easily modified to match the necessary input/output formats.
The software package we present is frequently being updated and enhanced with new
functionality. Currently, we are further extending our set of accelerated routines for
Matlab by means of mex files and planning for the development of routines targeting
very large matrices using distributed memory computation. We also plan to add 
support for sparse matrices and further improve performance of the routines on GPUs.

\section{Availability of software}
\label{sec:availability}

The latest version of the open source software can be obtained from
\url{https://github.com/sergeyvoronin} and is available with a GNU GPL v3 license.
The multi core implementation relies on the Intel MKL library available from
\url{https://software.intel.com/en-us/intel-mkl}. The GPU accelerated implementations
rely on MKL and the CULA dense library available from \url{http://www.culatools.com/},
built atop NVIDIA's CUDA framework or on MKL and NVIDIA cuBLAS (available with CUDA 
from \url{https://developer.nvidia.com/cuda-zone}). 
The MKL and CULA libraries are not open source, but are available under a variety of 
licensing terms.

\vspace{4mm}

\textbf{Acknowledgments:}
The research reported was supported by DARPA, under the contract N66001-13-1-4050,
by the NSF, under the contract DMS-1320652, and by an equipment award from NVIDIA.
\textit{Any opinions, findings and conclusions or recommendations
 expressed in this material are those of the author(s) and do not
 necessarily reflect the views of the National Science Foundation
 (NSF).}

\bibliographystyle{plain}
\bibliography{main_bib}

\newpage

\begin{appendix}

\section{Summary of algorithms}
\label{app:pseudo}
In this section, we present the pseudocode for the different algorithms 
discussed in the text. We first present the \ID~(Algorithm \ref{algo:rank_k_id}), 
the two-sided \ID~(Algorithm \ref{algo:rank_k_tsid}), and the 
\CUR~decomposition~(Algorithm \ref{algo:rank_k_cur}) without the use of randomization, as they were discussed in section \ref{sec:matrix_decompositions}. Notice that 
all of these algorithms rely on the rank $k$ pivoted \QR~factorization, which is 
usually not provided as a built in function in existing LAPACK packages. 
The low rank \SVD~of rank $k$ is trivial to obtain without randomization, 
from the full \SVD. 

\begin{figure*}[!ht]
\centering
\begin{minipage}[c]{15.5cm}
\begin{algorithm}[H]
\SetKwInOut{Input}{Input}
\SetKwInOut{Output}{Output}
\caption{A rank $k$ \ID~decomposition \label{algo:rank_k_id}}
\Input{$\mtx{A}\in\mathbb{C}^{m\times n}$ and parameter $k < \min(m,n)$.}
\Output{\mtx{A} column index set $J$ and a matrix
$\mtx{V}\in\mathbb{C}^{n\times k}$
such that $\mtx{A} \approx \mtx{A}(:,J(1:k)) \mtx{V}^{*}$.}
\BlankLine
Perform a rank $k$ column pivoted \QR~factorization to get
$\mtx{A} \mtx{P} \approx \mtx{Q}_1 \mtx{S}_1$\;
\Indp
\Indm define the ordered index set $J$ via $\mtx{I}(:,J) = \mtx{P}$\;
partition $\mtx{S}_1$: $\mtx{S}_{11} = \mtx{S}_1(:,1:k)$,
$\mtx{S}_{12} = \mtx{S}_1(:,k+1:n)$\;
$\mtx{V} =
\mtx{P} \begin{bmatrix} \mtx{I}_k & \mtx{S}_{11}^{-1} \mtx{S}_{12} \end{bmatrix}^*$\;
\end{algorithm}
\end{minipage}
\end{figure*}

\begin{figure*}[!ht]
\centering
\begin{minipage}[c]{15.5cm}
\begin{algorithm}[H]
\SetKwInOut{Input}{Input}
\SetKwInOut{Output}{Output}
\caption{A rank $k$ two sided \ID~decomposition \label{algo:rank_k_tsid}}
\Input{$\mtx{A}\in\mathbb{C}^{m\times n}$ and parameter $k < \min(m,n)$.}
\Output{A column index set $J$, a row index set $I$ and a matrices
$\mtx{V}\in\mathbb{C}^{n\times k}$ and $\mtx{W}\in\mathbb{C}^{m \times k}$
such that $\mtx{A} \approx \mtx{W} \mtx{A}(I(1:k),J(1:k)) \mtx{V}^{*}$.}
\BlankLine
Perform a one sided rank $k$ \ID~of $\mtx{A}$ so that
$\mtx{A} \approx \mtx{C} \mtx{V}^{*}$ where $\mtx{C} = \mtx{A}(:,J(1:k))$\;
Perform a full rank \ID~on $\mtx{C}^*$ so that
$\mtx{C}^{*} = \mtx{C}^*(:,I(1:k)) \mtx{W}^{*}$\;
\end{algorithm}
\end{minipage}
\end{figure*}

\begin{figure*}[!ht]
\centering
\begin{minipage}[c]{15.5cm}
\begin{algorithm}[H]
\SetKwInOut{Input}{Input}
\SetKwInOut{Output}{Output}
\caption{A rank $k$ \CURID~algorithm \label{algo:rank_k_cur}}
\Input{$\mtx{A}\in\mathbb{C}^{m\times n}$ and parameter $k < \min(m,n)$.}
\Output{Matrices $\mtx{C} \in \mathbb{C}^{m \times k}$, $\mtx{R} \in \mathbb{C}^{k \times n}$, and
$\mtx{U} \in \mathbb{C}^{k \times k}$ (such that $\mtx{A} \approx \mtx{C} \mtx{U} \mtx{R}$).}
\BlankLine
Construct a rank $k$ two sided \ID~of $\mtx{A}$ so that
$\mtx{A} \approx \mtx{W} \mtx{A}(I(1:k),J(1:k)) \mtx{V}^{*}$\;
Construct matrices $\mtx{C} = \mtx{A}(:,J(1:k))$ and $\mtx{R} = \mtx{A}(I(1:k),:)$\;
Construct matrix $\mtx{U}$ via $\mtx{U} = \mtx{V}^{*} \mtx{R}^{\dagger}$\;
\end{algorithm}
\end{minipage}
\end{figure*}

Next, we present the randomized algorithms for computing the approximate low 
rank \SVD, the \ID, and \QB~decompositions. First we present the 
two low rank \SVD~methods from section
\ref{sec:randomized_algorithms_for_low_rank_svd}. 
Algorithm \ref{algo:rank_rsvd1}, which uses the eigendecomposition 
of the small $\mtx{B} \mtx{B}^*$ matrix in place of the \SVD~of $\mtx{B}$, 
and Algorithm \ref{algo:rank_rsvd2} which uses a \QR~factorization of 
$\mtx{B}^*$ to construct a small matrix $\mtx{\hat{R}}$ on which the \SVD~is 
performed. Notice that for very large matrices $\mtx{A}$, even the 
corresponding smaller matrix $\mtx{B}$ (which will in general not be sparse 
even if $\mtx{A}$ is) may still be too large to multiply. A derivative of 
Algorithm  \ref{algo:rank_rsvd1} can be used in this case. The matrix matrix 
product $\mtx{B} \mtx{B}^*$ can be evaluated a column at a time 
(via multiplication with standard basis vectors, as in 
$\mtx{B} \mtx{B}^* \vct{e}_j$) when $\mtx{B}$ is too large. The 
same can be done for the computation of $\mtx{V}_k$ via the matrix-matrix 
product $\mtx{B}^{*} \hat{\mtx{U}} \mtx{\Sigma}_k^{-1}$.

Notice that for both Algorithms \ref{algo:rank_rsvd1} and \ref{algo:rank_rsvd2}, 
the largest (by magnitude) $k$ singular value components are extracted 
at the end of the procedure.  However, in some software packages (like 
Matlab) the eigenvalue ordering for the eigendecomposition is opposite to that 
of the singular value ordering for the \SVD. For this reason, the last 
line of Algorithm \ref{algo:rank_rsvd1} shows the last $k$ components 
being extracted. Notice that steps $14$ and $15$ of both 
methods can be combined to yield more efficient computations with 
smaller matrices (e.g. $\mtx{U}_k = \mtx{Q} \hat{\mtx{U}}(:,(p+1):l)$).  

Next, we show the pseudocode for the randomized \ID~decomposition based on 
the discussion in section \ref{sec:randidalgs}. 
Algorithm \ref{algo:rank_k_rand_id} can be further enhanced by using 
the power sampling scheme, as discussed in \ref{sec:randidalgs}.

Finally, in Algorithms \ref{algo:randpbQB2} and \ref{algo:randpbQB3}, we 
present two randomized variants of the \QB~decomposition discussed in section 
\ref{sec:randomized_algorithms}. Of these, Algorithm \ref{algo:randpbQB3} 
is only approximate, but has the advantage that it can be 
further parallelized than Algorithm \ref{algo:randpbQB2} and does not require the 
update of the original (or copy of) matrix $\mtx{A}$.

\begin{figure*}[!ht]
\centering
\begin{minipage}[c]{14cm}
\begin{algorithm}[H]
\SetKwInOut{Input}{Input}
\SetKwInOut{Output}{Output}
\caption{RSVD Algorithm Version I \label{algo:rank_rsvd1}}
\Input{$\mtx{A}\in\mathbb{R}^{m\times n}$, integer rank parameter $k < \min(m,n)$,
an integer oversampling parameter $p>0$, an integer power sampling parameter $q \geq 0$,
and an integer re-orthogonalization amount parameter $s \geq 1$.}
\Output{Matrices $\mtx{U}_k \in \mathbb{R}^{m \times k}$, $\mtx{\Sigma}_k \in \mathbb{R}^{k \times k}$, and $\mtx{V}_k \in \mathbb{R}^{k \times n}$.}
\BlankLine
Set $l = k+p$ and initialize a matrix $\mtx{R} \in \mathbb{R}^{n \times l}$ with Gaussian random entries\;
Form samples matrix $\mtx{Y} = \mtx{A} \mtx{R}$ and utilize optional power scheme\;
\For{$j=1$ \KwTo $q$}{
\If{$\mod{\left((2j-2),s\right)} == 0$}{
$[\mtx{Y},\cdot] = \qr(\mtx{Y},0)$\;
}
$\mtx{Z} = \mtx{A}^{*} \mtx{Y}$\;
\If{$\mod{\left((2j-1),s\right)} == 0$}{
$[\mtx{Z},\cdot] = \qr(\mtx{Z},0)$\;
}
$\mtx{Y} = \mtx{A} \mtx{Z}$\;
}
Orthonormalize the columns of $\mtx{Y}$ in $[\mtx{Q},\cdot] = \qr(\mtx{Y},0)$\;

Obtain the smaller matrix $\mtx{B} = \mtx{Q}^{*} \mtx{A}$ derived from $\mtx{A}$\;
Form the even smaller $l \times l$ matrix $\mtx{T} = \mtx{B} \mtx{B}^{*}$\;
Perform eigendecomposition of $l \times l$ matrix $[\hat{\mtx{U}},\mtx{D}] = \textrm{eig}(\mtx{T})$ \;
Form the approximate low rank \SVD~components of $\mtx{A}$ using the results of the eigendecomposition $\mtx{\Sigma}_k = \sqrt{\mtx{D}}$, $\mtx{U}_k = \mtx{Q} \hat{\mtx{U}}$, $\mtx{V}_k = \mtx{B}^{*} \hat{\mtx{U}} \mtx{\Sigma}_k^{-1}$\;
Extract components corresponding to the $k$ largest by magnitude singular values 
$\mtx{U}_k = \mtx{U}_k(:,(p+1):l); \quad \mtx{\Sigma}_k = \mtx{\Sigma}_k((p+1):l,(p+1):l); \quad \mtx{V}_k = \mtx{V}_k(:,(p+1):l)$\;
\end{algorithm}
\end{minipage}
\end{figure*}

\newpage

\begin{figure*}[!ht]
\centering
\begin{minipage}[c]{14cm}
\begin{algorithm}[H]
\SetKwInOut{Input}{Input}
\SetKwInOut{Output}{Output}
\caption{RSVD Algorithm Version II \label{algo:rank_rsvd2}}
\Input{$\mtx{A}\in\mathbb{R}^{m\times n}$, integer rank parameter $k < \min(m,n)$,
an integer oversampling parameter $p>0$, an integer power sampling parameter $q \geq 0$,
and an integer re-orthogonalization amount parameter $s \geq 1$.}
\Output{Matrices $\mtx{U}_k \in \mathbb{R}^{m \times k}$, $\mtx{\Sigma}_k \in \mathbb{R}^{k \times k}$,
and $\mtx{V}_k \in \mathbb{R}^{k \times n}$.}
\BlankLine
Set $l = k+p$ and initialize a matrix $\mtx{R} \in \mathbb{R}^{n \times l}$ with Gaussian random entries\;
Form samples matrix $\mtx{Y} = \mtx{A} \mtx{R}$ and utilize optional power scheme\;
\For{$j=1$ \KwTo $q$}{
\If{$\mod{\left((2j-2),s\right)} == 0$}{
$[\mtx{Y},\cdot] = \qr(\mtx{Y},0)$\;
}
$\mtx{Z} = \mtx{A}^{*} \mtx{Y}$\;
\If{$\mod{\left((2j-1),s\right)} == 0$}{
$[\mtx{Z},\cdot] = \qr(\mtx{Z},0)$\;
}
$\mtx{Y} = \mtx{A} \mtx{Z}$\;
}
Orthonormalize the columns of $\mtx{Y}$ in $[\mtx{Q},\cdot] = \qr(\mtx{Y},0)$\;
Compute the smaller matrix $\mtx{Bt} = \mtx{A}^{*} \mtx{Q}$\;
Obtain the small $l \times l$ matrix $\hat{\mtx{R}}$ using a compact \QR~factorization $[\hat{\mtx{Q}},\hat{\mtx{R}}] = \qr(\mtx{Bt},0)$\;
Take the \SVD~of the $l \times l$ matrix $\hat{\mtx{R}}$, $[\hat{\mtx{U}},\mtx{\Sigma}_k,\hat{\mtx{V}}] = \svd(\hat{\mtx{R}})$\;
Form the approximate low rank \SVD~components of $\mtx{A}$ using the results of the \SVD~of 
$\hat{\mtx{R}}$. 
$\mtx{U}_k = \mtx{Q} \hat{\mtx{V}}$, 
$\mtx{V}_k = \hat{\mtx{Q}} \hat{\mtx{U}}$\;
Extract components corresponding to the $k$ largest by magnitude singular values 
$\mtx{U}_k = \mtx{U}_k(:,1:k); \quad \mtx{\Sigma}_k = \mtx{\Sigma}_k(1:k,1:k); \quad \mtx{V}_k = \mtx{V}_k(:,1:k)$\;
\end{algorithm}
\end{minipage}
\end{figure*}

\begin{figure*}[!ht]
\centering
\begin{minipage}[c]{15.8cm}
\begin{algorithm}[H]
\SetKwInOut{Input}{Input}
\SetKwInOut{Output}{Output}
\caption{A randomized rank $k$ \ID~decomposition \label{algo:rank_k_rand_id}}
\Input{$\mtx{A}\in\mathbb{R}^{m\times n}$, a rank parameter $k < \min(m,n)$,
and an oversampling parameter $p$.}
\Output{A column index set $J$ and a matrix
$\mtx{V}\in\mathbb{R}^{n\times k}$
(such that $\mtx{A} \approx \mtx{A}(:,J(1:k)) \mtx{V}^{*}$).}
\BlankLine
Construct a random matrix $\mtx{\Omega} \in \mathbb{R}^{(k+p) \times m}$
with i.i.d.~Gaussian entries\;
Construct the sample matrix $\mtx{Y} = \mtx{\Omega} \mtx{A}$\;
Perform full pivoted \QR~factorization on $\mtx{Y}$ to get:
$\mtx{Y} \mtx{P} = \mtx{Q} \mtx{S}$\;
Remove $p$ columns of $\mtx{Q}$ and $p$ rows of $\mtx{S}$ to construct
$\mtx{Q}_1$ and $\mtx{S}_1$\;
Define the ordered index set $J$ via $\mtx{I}(:,J) = \mtx{P}$\;
Partition $\mtx{S}_1$: $\mtx{S}_{11} = \mtx{S}_1(:,1:k)$,
$\mtx{S}_{12} = \mtx{S}_1(:,k+1:n)$\;
$\mtx{V} = \mtx{P} \begin{bmatrix} \mtx{I}_k & \mtx{S}_{11}^{-1} \mtx{S}_{12} \end{bmatrix}^*$\;
\end{algorithm}
\end{minipage}
\end{figure*}

\vspace{3.mm}

\begin{figure}
\begin{algorithm}[H]
\SetKwInOut{Input}{Input}
\SetKwInOut{Output}{Output}
\caption{A randomized blocked \QB~decomposition \label{algo:randpbQB2}}
\Input{$\mtx{A}\in\mathbb{R}^{m\times n}$, integer block size parameter $b$,
maximum number of blocks $M$, and double tolerance parameter $\varepsilon$.}
\Output{Matrices $\mtx{Q} \in \mathbb{R}^{m \times (b i)}$, $\mtx{B} \in \mathbb{R}^{(b i) \times n}$,
s.t. $\|\mtx{Q} \mtx{B} - \mtx{A}\| < \varepsilon$ (if $M$ large enough).}
\BlankLine
\For{$i=1$ \KwTo $M$}{
    $\mtx{\Omega}_{i} = \texttt{randn}(n,b)$\;
    $\mtx{Q}_{i} = \texttt{orth}(\mtx{A}\mtx{\Omega}_{i})$\;
    \For{$j=1$ \KwTo $q$}{
        $\mtx{Q}_{i} = \texttt{orth}(\mtx{A}^{*}\mtx{Q}_{i})$ \;
        $\mtx{Q}_{i} = \texttt{orth}(\mtx{A}    \mtx{Q}_{i})$ \;
    }
    $\mtx{Q}_{i} = \texttt{orth}(\mtx{Q}_{i} - \sum_{j=1}^{i-1}\mtx{Q}_{j}\mtx{Q}_{j}^{*}\mtx{Q}_{i})$\;
    $\mtx{B}_{i} = \vct{Q}_{i}^{*}\mtx{A}$\;
    $\mtx{A} = \mtx{A} - \mtx{Q}_{i}\mtx{B}_{i}$\;
    \textbf{if} $\|\mtx{A}\| < \varepsilon$ \textbf{then break}\;
}
$\mtx{Q} = [\mtx{Q}_{1}\ \cdots\ \mtx{Q}_{i}]$ and $\mtx{B} = [\mtx{B}_{1}^{*}\ \cdots\ \mtx{B}_{i}^{*}]^{*}$\;
\end{algorithm}
\end{figure}

\begin{figure}
\begin{algorithm}[H]
\SetKwInOut{Input}{Input}
\SetKwInOut{Output}{Output}
\caption{An approximate parallelizable randomized blocked \QB~decomposition \label{algo:randpbQB3}}
\Input{$\mtx{A}\in\mathbb{R}^{m\times n}$, integer block size parameter $b$,
maximum number of blocks $M$.}
\Output{Matrices $\mtx{Q} \in \mathbb{R}^{m \times (b M)}$, $\mtx{B} \in \mathbb{R}^{(b M) \times n}$,
s.t. $\mtx{Q} \mtx{B} \approx \mtx{A}$.}
\BlankLine
$\mtx{Q} = \left[\right]$\;
\For{$i=1$ \KwTo $M$}{
    $\mtx{\Omega}_{i} = \texttt{randn}(n,b)$\;
    $\mtx{Y}_{i} = \mtx{A} \mtx{\Omega}_{i}$\;
    \For{$j=1$ \KwTo $q$}{
        $\mtx{Q}_{i} = \texttt{orth}(\mtx{Y}_{i})$\;
        $\mtx{Y}_{i} = \mtx{A}^* \mtx{Q}_{i}$\;
        $\mtx{Q}_{i} = \texttt{orth}(\mtx{Y}_{i})$\;
        $\mtx{Y}_{i} = \mtx{A} \mtx{Q}_{i}$\;
    }
}

\For{$i=1$ \KwTo $M$}{
    $\mtx{Q}_{i} = \orth(\mtx{Y}_{i})$\;
}

\For{$i=1$ \KwTo $M$}{
    $\mtx{Q}_{i} = \mtx{Q}_i - \mtx{Q} \mtx{Q}^* \mtx{Q}_i$\;
    $\mtx{Q}_{i} = \orth(\mtx{Q}_{i})$\;
    $\mtx{Q} = \left[\mtx{Q}, \mtx{Q}_i\right]$\;
}

$\mtx{B} = \mtx{Q}^* \mtx{A}$\;
\end{algorithm}
\end{figure}

\end{appendix}

\end{document}